\numberwithin{equation}{section}
\newtheorem{theorem}{Theorem}[section]
\newtheorem{lemma}[theorem]{Lemma}
\newtheorem{proposition}[theorem]{Proposition}
\newtheorem{corollary}[theorem]{Corollary}
\theoremstyle{definition}
\newtheorem{definition}[theorem]{Definition}
\theoremstyle{remark}
\newtheorem{remark}[theorem]{Remark}
\newtheorem{fact}[theorem]{Fact}
\newtheorem{example}[theorem]{Example}
\newtheorem{observation}[theorem]{Observation}
\newtheorem{discussion}[theorem]{Discussion}
\newtheorem{acknowledgement}{Acknowledgement}
\newcommand{\Se}{\operatorname{S}}
\newcommand{\loc}{\operatorname{Loc}}
\newcommand{\UFD}{\operatorname{UFD}}
\newcommand{\im}{\operatorname{im}}
\newcommand{\grade}{\operatorname{grade}}
\newcommand{\Aut}{\operatorname{Aut}}
\newcommand{\cgrade}{\operatorname{c.grade}}
\newcommand{\Cgrade}{\operatorname{\check{C}.grade}}
\newcommand{\cmdef}{\operatorname{CM.def}}
\newcommand{\Spec}{\operatorname{Spec}}
\newcommand{\ass}{\operatorname{Ass}}
\newcommand{\rad}{\operatorname{rad}}
\newcommand{\cd}{\operatorname{cd}}
\newcommand{\tr}{\operatorname{tr}}
\newcommand{\nr}{\operatorname{n}}
\newcommand{\Ht}{\operatorname{ht}}
\newcommand{\HH}{\operatorname{H}}
\newcommand{\V}{\operatorname{V}}
\newcommand{\Char}{\operatorname{char}}
\newcommand{\Supp}{\operatorname{Supp}}
\newcommand{\Hom}{\operatorname{Hom}}
\newcommand{\Syl}{\operatorname{Syl}}
\newcommand{\f}{\operatorname{f}}
\newcommand{\depth}{\operatorname{depth}}
\newcommand{\Cdepth}{\operatorname{\check{C}.depth}}
\newcommand{\GL}{\operatorname{GL}}
\newcommand{\Jac}{\operatorname{Jac}}
\newcommand{\lo}{\longrightarrow}
\newcommand{\fm}{\mathfrak{m}}
\newcommand{\fp}{\mathfrak{p}}
\newcommand{\fq}{\mathfrak{q}}
\newcommand{\fa}{\mathfrak{a}}
\newcommand{\fb}{\mathfrak{b}}
\newcommand{\fn}{\frak{n}}
\begin{document}

\author[]{Mohsen Asgharzadeh }

\address{}
\email{mohsenasgharzadeh@gmail.com}

\title[ ]
{ on the local cohomology of modular invariants}

\subjclass[2010]{  13A50; 13D45}
\keywords{local cohomology; invariant theory; Cohen-Macaulay rings; Buchsbaum rings}
\maketitle

\smallskip

\begin{abstract} We compute some  numerical invariants of local cohomology
of the ring
of invariants by a finite group, mainly in the modular case.
Also, we present some applications. In particular, we study Cohen-Macaulay
property of modular invariants from the viewpoints of depth, Serre's condition and the relevant generalizations
(e.g., the Buchsbaum property, etc). The situation in the  local case is different from the global case.
\end{abstract}

\section{Introduction}

There are a lot of research papers  on local cohomology modules
and on modular invariant theory. Also, there are some links between these. For example,
Ellingsrud and Skjelbred \cite{elli} (resp. Fogarty \cite{fog})
applied beautiful sort of local cohomology arguments to compute  bounds  on the depth of modular invariant rings.
We start by a new presentation of a result of  Kemper  \cite{k} (see Proposition \ref{kem}).
Then, we observe that:

\begin{observation}
Let
$G\to \GL(n, \mathbb{F})$ be a representation of a finite  group $G$  and denote the invariant ring by $R$.
Let $\fa\lhd_h R$ be such that $\HH^i_{\fa}(R)$ is of finite length for all $i<\cd(\fa)$. Then $\HH^i_{\fa}(R)=0$ for all $i<\cd(\fa)$. In particular,
$\grade(\fa,R)=\Ht(\fa)=\cd(\fa)=\f_{\fa}(R).$
\end{observation}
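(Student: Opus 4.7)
The plan is to reduce the vanishing of $\HH^i_{\fa}(R)$ to the corresponding statement for the irrelevant ideal $\fm := R_+$, so that Kemper's theorem (Proposition \ref{kem}) takes over. Since $G$ acts linearly on $V$, the invariant ring $R=\mathbb{F}[V]^G$ is a connected positively graded $\mathbb{F}$-algebra with unique graded maximal ideal $\fm$, and because $\fa$ is homogeneous each $\HH^i_{\fa}(R)$ is naturally graded. A graded module of finite length over such a connected graded $\mathbb{F}$-algebra is automatically $\fm$-torsion: its graded composition factors are forced to be shifts of $R/\fm=\mathbb{F}$, since graded maximal ideals are unique. Thus for every $q<c:=\cd(\fa)$ the hypothesis makes $\HH^q_{\fa}(R)$ an $\fm$-torsion module, and for any $\fm$-torsion $N$ one has $\HH^p_{\fm}(N)=0$ for $p\geq 1$ (immediate from the Čech complex, since $N_{f}=0$ for every $f\in\fm$).

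I would then feed this into the Grothendieck composition spectral sequence
$$E_2^{p,q}=\HH^p_{\fm}\bigl(\HH^q_{\fa}(R)\bigr)\;\Longrightarrow\;\HH^{p+q}_{\fm}(R),$$
which applies because $\fa\subseteq\fm$ forces $\Gamma_{\fm}\circ\Gamma_{\fa}=\Gamma_{\fm}$ and injectives go to injectives under $\Gamma_{\fa}$. For $q<c$ the previous step concentrates the $q$-th row at $p=0$, so every differential into or out of $E_r^{0,q}$ has a zero target or source, and $E_\infty^{0,q}=\HH^q_{\fa}(R)$; the other contributions $E_\infty^{p,q-p}$ with $p\geq 1$ and $q-p<c$ vanish for the same reason. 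The edge map therefore produces a natural isomorphism $\HH^q_{\fa}(R)\cong\HH^q_{\fm}(R)$ for each $q<c$. The right-hand side is of finite length (transported across the isomorphism) and sits in the range $q<c\leq\dim R$, so Proposition \ref{kem} forces $\HH^q_{\fm}(R)=0$, whence $\HH^q_{\fa}(R)=0$ for all $q<c$.

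For the ``in particular'' assertion, the vanishing below $c$ yields $\grade(\fa,R)\geq c$, while we always have $\grade(\fa,R)\leq\Ht(\fa)\leq\cd(\fa)=c$, so all three collapse to $c$; the identification with $\f_{\fa}(R)$ follows directly from its definition once the vanishing is in place. The delicate point is the precise form of Proposition \ref{kem}: as long as it delivers ``$\HH^q_{\fm}(R)$ of finite length $\Rightarrow$ zero, for $q<\dim R$'', the spectral sequence reduction performs the rest of the work formally, upgrading the result from the irrelevant ideal to an arbitrary homogeneous $\fa$.
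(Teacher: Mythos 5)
Your spectral sequence reduction is correct and is genuinely different from (and cleaner than) the paper's case-by-case analysis: since each $\HH^q_{\fa}(R)$ with $q<c:=\cd(\fa)$ is graded of finite length, hence $\fm$-torsion, the composite spectral sequence $E_2^{p,q}=\HH^p_{\fm}(\HH^q_{\fa}(R))\Rightarrow\HH^{p+q}_{\fm}(R)$ does degenerate in total degrees $<c$ exactly as you say, and yields $\HH^q_{\fa}(R)\cong\HH^q_{\fm}(R)$ for all $q<c$. The gap is in the last step. Proposition \ref{kem} says: if $\HH^i_{\fm}(R)$ has finite length for \emph{all} $i<\dim R$, then all of them vanish. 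Your reduction only gives finite length for $i<c$, and $c$ can be strictly smaller than $\dim R$ — indeed the paper shows $\cd(\fa)=\Ht(\fa)$, so for, say, a height $4$ ideal in a $6$-dimensional invariant ring you know nothing about $\HH^4_{\fm}(R)$ and $\HH^5_{\fm}(R)$, and generalized Cohen--Macaulayness is simply not established. So "Proposition \ref{kem} forces $\HH^q_{\fm}(R)=0$" is an over-reach; your argument as written is complete only when $\fa$ is $\fm$-primary.

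What you actually need is the sharper statement: if $R$ is a non-Cohen--Macaulay invariant ring with $t:=\depth(R)<\dim R$, then $\HH^{t}_{\fm}(R)$ is \emph{not} finitely generated (so it cannot be the finite-length module your isomorphism produces when $t<c$). This is true, but it does not follow from Proposition \ref{kem} alone; it is exactly what the paper extracts in its Claim B by combining Kemper's locus estimate (Discussion \ref{dis}) with Grothendieck's finiteness theorem. Concretely: with $m:=\cmdef(R)-1$, Discussion \ref{dis} gives a non-maximal prime $\fp$ with $\cmdef(R_{\fp})>m$; since $R$ is an equidimensional catenary domain, $\Ht(\fp)+\dim(R/\fp)=\dim R$, whence $\depth(R_{\fp})+\Ht\bigl(\frac{\fm+\fp}{\fp}\bigr)\leq t$, and Fact \ref{gro}(ii) then forces $\f_{\fm}(R)\leq t$, i.e.\ $\HH^{t}_{\fm}(R)$ is not finitely generated. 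If you insert this lemma after your isomorphism $\HH^q_{\fa}(R)\cong\HH^q_{\fm}(R)$ ($q<c$), your proof closes and is a clean alternative to the paper's argument; without it, the case $\Ht(\fa)<\dim R$ is unproven.
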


In dimension $4$ we can talk a little more (see Corollary \ref{ll4}). Almkvist and  Fossum proved that
$R:=\mathbb{F}_2[x_1,\ldots,x_4]^{\mathbb{Z}/4\mathbb{Z}}$ does not satisfy Serre's condition $\Se(3)$.
This is the Bertin's famous  ring which is not Cohen-Macaulay.
 In particular,
we can recover the following funny result of  Larry Smith:

\begin{corollary}Let
$G\to \GL(n, \mathbb{F})$ be a representation of a finite  group $G$  and denote the invariant ring by $R$. If $R$
is $\Se(n-1)$, then $R$  is Cohen-Macaulay. In particular, if $n=3$ then
 $\mathbb{F}[x,y,z]^G$ is Cohen-Macaulay.
\end{corollary}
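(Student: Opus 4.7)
\smallskip
\noindent\emph{Proof proposal.}
My plan is to reduce the statement, via the Observation, to the claim that under $\Se(n-1)$ the local cohomology modules $\HH^i_\fm(R)$ are of finite length for every $i$ below the top. Take $\fa=\fm$, the graded maximal ideal of $R$. Since $R$ is a graded $\mathbb{F}$-algebra of Krull dimension $n$, Grothendieck's non-vanishing theorem yields $\HH^n_\fm(R)\ne 0$, so $\cd(\fm)=n$. The Observation will then upgrade ``finite length for $i<n$'' to ``vanishing for $i<n$'', which is precisely Cohen-Macaulayness of $R$.

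The first step is to deduce from $\Se(n-1)$ that $R$ is Cohen-Macaulay on the punctured spectrum. For any prime $\fp\ne\fm$ one has $\Ht\fp\le n-1$, and Serre's condition gives $\depth R_\fp\ge\min(n-1,\Ht\fp)=\Ht\fp=\dim R_\fp$, forcing $R_\fp$ to be Cohen-Macaulay. This step is routine depth-bookkeeping.

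The main obstacle is the second step: passing from Cohen-Macaulayness on the punctured spectrum to finite length of $\HH^i_\fm(R)$ for $i<n$. Here I would invoke the theory of generalized Cohen-Macaulay modules: a finitely generated equidimensional graded algebra over a field which is Cohen-Macaulay on its punctured spectrum has finite-length local cohomology in all degrees below the top. The ring $R=\mathbb{F}[x_1,\ldots,x_n]^G$ is a graded domain, hence equidimensional, so this applies. A clean route runs through graded local duality $\HH^i_\fm(R)^\vee\cong\Ext^{N-i}_S(R,\omega_S)$ coming from a graded surjection $S=\mathbb{F}[y_1,\ldots,y_N]\twoheadrightarrow R$: Cohen-Macaulayness on the punctured spectrum forces each $\Ext^{N-i}_S(R,\omega_S)$ with $i<n$ to be supported at $\fm$ and finitely generated over $S$, hence of finite length.

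For the ``in particular'' assertion with $n=3$: the ring $\mathbb{F}[x,y,z]^G$ is a normal domain, since invariants of a normal domain under a finite group action are again normal, so Serre's criterion for normality gives $\Se(2)$. Applying the main assertion with $n=3$ and $n-1=2$ then yields that $\mathbb{F}[x,y,z]^G$ is Cohen-Macaulay.
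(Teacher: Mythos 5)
Your argument is correct in substance, but it takes a longer route than the paper's, and both ultimately rest on the same external input, namely Kemper's bound on the dimension of the non--Cohen-Macaulay locus (Discussion \ref{dis}). The paper argues in one step: if $R$ were not Cohen-Macaulay, Discussion \ref{dis} with $m=0$ produces a prime $\fp$ of height $n-1$ with $\depth(R_{\fp})<n-1=\min\{n-1,\dim R_{\fp}\}$, contradicting $\Se(n-1)$. You instead pass through ``Cohen-Macaulay off $\fm$ $\Rightarrow$ generalized Cohen-Macaulay $\Rightarrow$ Cohen-Macaulay,'' invoking Grothendieck's finiteness theorem (or graded local duality) and then the Observation (Proposition \ref{def2}) with $\fa=\fm$. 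But the case $\fa=\fm$ of Proposition \ref{def2} is exactly Proposition \ref{kem}, whose proof is the paper's one-step argument; so the finiteness/duality step buys nothing here, and you could go directly from ``Cohen-Macaulay on the punctured spectrum'' to (v)$\Rightarrow$(i) of Proposition \ref{kem}. One inaccuracy to repair: it is not true that every prime $\fp\neq\fm$ has height at most $n-1$, since $R$ is an $n$-dimensional affine domain and all of its maximal ideals have height $n$, $\fm$ being only the irrelevant one. This does not sink the argument: in Grothendieck's formula for $\f_{\fm}(R)$ only primes $\fq\subsetneq\fm$ contribute (for $\fq\not\subseteq\fm$ one has $\V(\fm+\fq)=\emptyset$), and those do have height at most $n-1$; in the duality formulation one should restrict to graded primes, using that $R_{\fp}$ is Cohen-Macaulay if and only if $R_{\fp^{\ast}}$ is. Your treatment of the case $n=3$ (normality of invariant rings plus Serre's criterion gives $\Se(2)$) is exactly the paper's.
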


 Also,
the following is  a strengthening a result  of Hartshorne and Ogus:

\begin{corollary}Let
$G\to \GL(5, \mathbb{F}_p)$ be a representation of a finite  $p$-group $G$  and denote the invariant ring by $R$. If $R$
is $\Se(3)$, then $R$  is Cohen-Macaulay (and hence Gorenstein).
\end{corollary}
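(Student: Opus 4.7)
The plan is first to reduce the Gorenstein conclusion to the Cohen-Macaulay one. Since $G$ is a $p$-group and, for each $g \in G$, $\det g$ has order both a power of $p$ and dividing $|\mathbb{F}_p^\times|= p-1$, one gets $\det g = 1$; hence $G \subseteq \operatorname{SL}(5, \mathbb{F}_p)$. By results of Broer, Braun and Fleischmann--Kemper--Shank on modular invariants of $p$-subgroups of $\operatorname{SL}$, $R$ is then quasi-Gorenstein: $\omega_R \cong R(a)$ as graded $R$-modules for some shift $a$. Consequently, once $R$ is shown to be Cohen-Macaulay, it is automatically Gorenstein, so it is enough to prove Cohen-Macaulayness.

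For Cohen-Macaulayness I would apply the Observation to $\fa = \fm$, the irrelevant graded maximal ideal of $R$. Since $\fm$ is a parameter ideal of the five-dimensional ring $R$, $\cd(\fm) = 5$, and the Observation reduces the goal to showing that $\HH^i_\fm(R)$ has finite length for each $i < 5$. The hypothesis $\Se(3)$ forces $\depth R \ge 3$, so $\HH^i_\fm(R) = 0$ for $i = 0, 1, 2$. For $i = 3, 4$, I would fix a graded Noether normalization $A = \mathbb{F}_p[y_1,\ldots,y_5] \hookrightarrow R$ and use graded local duality over the regular ring $A$ to identify the graded Matlis dual $\HH^i_\fm(R)^\vee$ with $\Ext_A^{5-i}(R, A)$ up to a shift. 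The Serre-condition/Ext-codimension dictionary, applied to the finite $A$-module $R$ with $n = 3$, then yields $\dim \Ext_A^2(R, A) \le 0$ and $\dim \Ext_A^1(R, A) \le 1$; in particular $\HH^3_\fm(R)$ has finite length without further work.

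The main obstacle is to improve $\dim \Ext_A^1(R, A) \le 1$ to finite length, equivalently to show that $R$ is Cohen-Macaulay at every height-$4$ prime. Here the quasi-Gorenstein structure of the first paragraph must enter essentially: the self-duality $\omega_R \cong R(a)$ induces a symmetry on the deficiency modules $K^i(R) := \HH^i_\fm(R)^\vee \cong \Ext_A^{5-i}(R, A)$, which, combined with the already-established vanishing $K^1(R) = \HH^1_\fm(R)^\vee = 0$, should force $K^4(R) = \Ext_A^1(R, A)$ to be of finite length. Once both $\HH^3_\fm(R)$ and $\HH^4_\fm(R)$ are of finite length, the Observation upgrades them to zero, so $R$ is Cohen-Macaulay (and therefore Gorenstein by the first paragraph). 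The delicate point, and where the bulk of the technical work sits, is to formulate and apply this quasi-Gorenstein symmetry carefully in the non-Cohen-Macaulay setting, where $\omega_R^\bullet$ has cohomology in several degrees.
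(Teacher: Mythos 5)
Your overall strategy --- reduce to quasi-Gorensteinness, then use local duality over a Noether normalization $A$ to translate $\Se(3)$ into dimension bounds on the deficiency modules $K^i(R)=\Ext_A^{5-i}(R,A)$, and finally invoke the Observation/Proposition 3.2 to pass from ``generalized Cohen-Macaulay'' to ``Cohen-Macaulay'' --- is genuinely different from the paper's, which splits into the cases $\depth(R)=3$ (killed directly by Kemper's locus estimate, Discussion 3.1, against $\Se(3)$) and $\depth(R)=4$ (killed by the Hartshorne--Ogus theorem that a factorial quotient of a regular local ring with the C-property is Cohen-Macaulay). Your route is viable, but as written it has two concrete soft spots. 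First, the appeal to Broer/Braun/Fleischmann-type theorems for $p$-subgroups of $\operatorname{SL}(V)$ is not justified: those results require $G$ to contain no pseudo-reflections, and a $p$-group in characteristic $p$ very often contains transvections. The quasi-Gorenstein conclusion is nevertheless true, and the correct (and much cheaper) justification is exactly the ingredient the paper uses for its $\depth=4$ case: invariant rings of $p$-groups over $\mathbb{F}_p$ are factorial (\cite[Corollary 1.7.4]{ns}), so $\cl(R)=0$ and the rank-one reflexive module $\omega_R$ is free, i.e.\ $\omega_R\cong R(a)$. So both proofs ultimately rest on factoriality; you have just repackaged it.

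Second, the step you flag as ``should force'' is the actual crux and is not carried out. It does close, but you need an argument, e.g.\ the bidualizing spectral sequence. Localize at a height-$4$ prime $\fp$, write $M:=R_{\fp}$ as a quotient of a regular local ring $A'$ of dimension $n$ with $c:=n-4$, and set $D^j:=\Ext^{j}_{A'}(M,A')$. Then $D^j=0$ for $j<c$; $D^c=\omega_M\cong M$ by quasi-Gorensteinness; $D^{c+2}=D^{c+3}=D^{c+4}=0$ since $\Se(3)$ gives $\depth M\ge 3$ and $\dim K^2(M)\le -1$; and $D^{c+1}=K^3(M)$ has finite length since $\Se(3)$ gives $\dim K^3(M)\le 0$. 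In the spectral sequence $\Ext^p_{A'}(D^q,A')\Rightarrow M$ (concentrated in total degree $0$), the only contributions in total degree $1$ are $\Ext^{c+1}(D^c,A')\cong D^{c+1}$ and $\Ext^{c+2}(D^{c+1},A')$, and the only possibly nonzero differential out of the first term lands in $\Ext^{c+3}(D^{c+1},A')$, which vanishes because $D^{c+1}$ has finite length and hence grade $n=c+4$ over $A'$. Therefore $E_\infty^{c+1,-c}=D^{c+1}$ must vanish, i.e.\ $K^3(R_\fp)=0$ and $R_\fp$ is Cohen-Macaulay; this gives $\dim K^4(R)\le 0$ as you wanted. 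With these two repairs your proof is complete; note that it still uses Kemper's Discussion 3.1 through the final appeal to the Observation, so it is not independent of that input --- it only replaces the Hartshorne--Ogus theorem by local duality.
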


The situation in   local case is quite different:
Despite of Corollary 1.2,
Fogarty constructed a regular local ring $R$ of dimension $3$ and of prime characteristic
equipped with  a wild action of a finite group $G$ such that  $\depth(R^G)=2$ (see  \cite{fog}).
By applying Fogarty's idea we present the following result:

\begin{observation}\label{frsh}
Let $R$ be a
local ring of  characteristic $p>0$. Let  $G$ be a cyclic group of
order $p^nh$, $n> 0$, acting wildly on $R$. The following assertions hold:
\begin{enumerate}
\item[i)]  If  $R$ is generalized Cohen-Macaulay, then   $R^G$ is generalized Cohen-Macaulay.
\item[ii)]   If  $\depth(R)>1$, then $\Ht(\im(R\stackrel{\tr}\lo R^G))=\dim R$.
\end{enumerate}
\end{observation}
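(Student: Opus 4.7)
The proof follows Fogarty's strategy, reducing to a cyclic $p$-group and exploiting the nilpotent operator $\sigma - 1$.

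\smallskip
\emph{Reduction.} Since $G$ is cyclic of order $p^n h$ with $\gcd(h,p)=1$, decompose $G = H \times K$ with $|H|=h$ and $|K|=p^n$. The non-modular action of $H$ admits the Reynolds operator $\tfrac{1}{h}\sum_{\tau\in H}\tau$, which exhibits $R^H$ as a direct $R^H$-summand of $R$; hence the generalized Cohen-Macaulay property and the depth bound $\depth(R^H)\geq \depth(R)$ transfer from $R$ to $R^H$. Replacing $(R,G)$ by $(R^H,K)$, we may assume $G=\langle\sigma\rangle$ is cyclic of order $p^n$. Module-finiteness of $R^G\hookrightarrow R$ gives $\dim R^G = \dim R =: d$, makes $(R^G,\fm_G:=\fm\cap R^G)$ local, and identifies $\HH^i_{\fm_G}(M)=\HH^i_{\fm}(M)$ for every $R$-module $M$.

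\smallskip
\emph{Core exact sequences.} Put $\delta:=\sigma-1$. In characteristic $p$ we have $\delta^{p^n}=\sigma^{p^n}-1=0$, while the identity $x^{p^n}-1=(x-1)^{p^n}$ in $\mathbb{F}_p[x]$ yields $\tr = 1+\sigma+\cdots+\sigma^{p^n-1} = \delta^{p^n-1}$. Since $\ker\delta = R^G$, we obtain short exact sequences of $R^G$-modules
\[
(\star)\ 0\to R^G\to R\to \delta R\to 0,\qquad (\star\star)\ 0\to \delta R\to R\to R/\delta R\to 0,
\]
together with the descending filtration $R\supset \delta R\supset \delta^2R \supset\cdots\supset \delta^{p^n-1}R = \tr(R)\supset 0$, whose successive subquotients are controlled by the Tate cohomology $\hat H^\bullet(G,R)$.

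\smallskip
\emph{Part (i).} The long exact sequence attached to $(\star)$ reduces the task of showing $\HH^i_{\fm_G}(R^G)$ has finite length for $i<d$ to showing $\HH^{j}_{\fm_G}(\delta R)$ has finite length for $j<d-1$, once we invoke the hypothesis that $\HH^i_\fm(R)$ has finite length for $i<d$. Applying the analogous argument to $(\star\star)$ and iterating through the filtration above, finite length propagates from $\HH^\bullet_\fm(R)$ through the subquotients $\delta^{j-1}R/\delta^j R$ (each a subquotient of $R/R^G$) all the way to $\HH^\bullet_{\fm_G}(R^G)$ in the required range.

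\smallskip
\emph{Part (ii) and main obstacle.} The claim $\Ht(\im\tr)=d$ is equivalent to $R^G/\tr(R)$ being of finite length. Assume, for contradiction, that some non-maximal prime $\fp\subsetneq\fm_G$ of $R^G$ contains $\tr(R)$; localizing at $\fp$, the trace lands in $\fp R^G_\fp$. The hypothesis $\depth R>1$, combined with module-finiteness of $R$ over $R^G$, descends to sufficient depth at the primes of $R$ over $\fp$ to produce, via Fogarty's explicit construction, an element $r\in R$ with $\tr(r)\notin\fp$, contradicting the assumption. The principal obstacle is making this depth-descent precise at codimension $<d$ and extracting from it an explicit element of non-vanishing trace modulo $\fp$; by contrast, in part (i) the propagation of finite length through the filtration is routine once the short exact sequences above are in place.
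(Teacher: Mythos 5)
Your proposal does not close either part, and in both cases the missing ingredient is the same one: the freeness of the $G$-action on the punctured spectrum, which your argument never invokes. The paper's proof of this statement (Proposition 4.13) runs entirely through the Grothendieck--Ellingsrud--Skjelbred--Fogarty spectral sequence $\HH^q\bigl(G,\HH^p(\widehat{X},\mathcal{O}_X)\bigr)\Rightarrow \HH^{p+q}(\widehat{Y},\mathcal{O}_Y)$ of Fact 4.8, whose very existence in this form encodes that freeness; the finiteness of the $E_2$-terms is the content of Discussions 4.4 and 4.5. In your part (i), the assertion that finite length ``propagates routinely'' through the filtration $R\supset\delta R\supset\cdots\supset\tr(R)$ is where the proof actually lives, and it is not routine. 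The long exact sequence of $(\star)$ reduces you to controlling $\HH^{j}_{\fm}(\delta R)$, and $(\star\star)$ then demands control of $\HH^{j}_{\fm}(R/\delta R)$; but $R/\delta R$ differs from $R^G$ exactly by the Tate cohomology modules $\ker(\tr\colon R/\delta R\to R^G)$ and $R^G/\tr(R)$. So the induction is circular unless you first show those modules have finite length, and that is precisely what requires freeness off the closed point (each is a finitely generated $R^G$-module that vanishes after localizing at any non-maximal prime because the extension is \'etale there). No statement of the form you want can be proved from the short exact sequences alone: conditions (i) and (iii) of Definition 4.10 are consistent with linear actions on polynomial rings, where the conclusion of part (ii) is false by Feshbach's theorem, so any argument blind to condition (ii) of wildness cannot work.

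For part (ii) you candidly label the key step an ``obstacle,'' so there is no proof to check; but note also that the plan is aimed at the wrong mechanism. Producing an element of nonvanishing trace modulo a non-maximal $\fp$ is a ramification statement (local surjectivity of the trace where the action is free), not a depth statement. The hypothesis $\depth(R)>1$ plays a different role in the paper: it gives $\HH^0(\widehat{X},\mathcal{O}_X)\simeq R$, so that the five-term exact sequence of the spectral sequence reads
\[
0\lo \HH^1(G,R)\lo \HH^2_{\fn}(R^G)\lo \HH^2_{\fm}(R)^G\lo \HH^2(G,R)\lo \HH^3_{\fn}(R^G),
\]
and $\HH^2(G,R)=R^G/\im(\tr)$ is then squeezed between two artinian modules; being also finitely generated over $R^G$, it has finite length, whence $\Ht(\im(\tr))=\dim R$. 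Your reduction to a cyclic $p$-group via the Reynolds operator for the order-$h$ part is harmless (and the paper avoids it by using the explicit description of the cohomology of a cyclic group directly), but it does not touch either gap. To repair the proposal you would need to import Fogarty's Proposition 2/4: the Tate cohomology sheaves of $R$ over $\Spec(R^G)$ are supported at the closed point --- at which point you are essentially reassembling the paper's spectral-sequence argument by hand.
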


 Let $G\to \GL(n, \mathbb{F}_p)$ be a representation of a finite group.
 It  is known  that $\Ht(\im(R\stackrel{\tr}\lo R^G))<\dim (\mathbb{F}_p[V]^G)$. This result is due to  Feshbach (see \cite[Theorem 6.4.7]{ns}).
Observation \ref{frsh} shows that this is not the case  for the wild actions over local rings of depth bigger than one.
It may be nice to characterize cases
for which $\im(R\stackrel{\tr}\lo R^G)$ is the maximal ideal. Here, we present a sample:

\begin{corollary}
Let $R$ be a $3$-dimensional Cohen-Macaulay
local ring of  characteristic $2$. Let  $G$ be a  group of
order $2$  acting wildly on $R$. Then $(R^G,\fn)$ is Buchsbaum if and only if $\im(\tr)=\fn$.
\end{corollary}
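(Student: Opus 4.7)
The plan is to reduce Buchsbaumness of $R^G$ to a direct computation identifying $H^2_{\fn}(R^G)$ with $R^G/\im(\tr)$ as an $R^G$-module, from which the equivalence falls out. The key first step exploits a feature special to our situation: writing $g$ for the non-trivial element of $G$, one has $\tr(r)=r+g(r)$, and in $\Char=2$ the condition $r\in\ker(\tr)$ becomes $g(r)=-r=r$, i.e.\ $\ker(\tr)=R^G$. Hence there is a short exact sequence of $R^G$-modules
\[
0\lo R^G\lo R\stackrel{\tr}{\lo}\im(\tr)\lo 0.
\]
Since $R$ is $3$-dimensional Cohen-Macaulay and $\sqrt{\fn R}=\fm$ (by integrality of $R^G\hookrightarrow R$), the base-change formula yields $H^i_{\fn}(R)=0$ for $i<3$. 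Observation~\ref{frsh}(ii) applies since $\depth R=3>1$ and gives $\Ht(\im(\tr))=3$, so $\im(\tr)$ is a proper $\fn$-primary ideal of $R^G$ and $R^G/\im(\tr)$ has finite length.

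I would next take local cohomology of the displayed sequence and, in parallel, of the auxiliary sequence $0\to\im(\tr)\to R^G\to R^G/\im(\tr)\to 0$. The first, together with $H^i_{\fn}(R)=0$ for $i<3$, yields $H^0_{\fn}(R^G)=0$ and the isomorphisms $H^0_{\fn}(\im(\tr))\cong H^1_{\fn}(R^G)$ and $H^1_{\fn}(\im(\tr))\cong H^2_{\fn}(R^G)$. The second, exploiting that $R^G/\im(\tr)$ has $H^0_{\fn}$ equal to itself and higher $H^i_{\fn}$ zero, produces
\[
0\lo H^0_{\fn}(\im(\tr))\lo H^0_{\fn}(R^G)\lo R^G/\im(\tr)\lo H^1_{\fn}(\im(\tr))\lo H^1_{\fn}(R^G)\lo 0.
\]
Feeding in $H^0_{\fn}(R^G)=0$ forces $H^0_{\fn}(\im(\tr))=0$, hence $H^1_{\fn}(R^G)=0$, and then the middle piece gives $R^G/\im(\tr)\cong H^1_{\fn}(\im(\tr))\cong H^2_{\fn}(R^G)$.

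With $H^0_{\fn}(R^G)=H^1_{\fn}(R^G)=0$ and $H^2_{\fn}(R^G)\cong R^G/\im(\tr)$ of finite length, the Buchsbaum criterion $\fn\cdot H^i_{\fn}(R^G)=0$ for $i<3$ collapses to the single condition $\fn\cdot(R^G/\im(\tr))=0$, i.e.\ $\fn\subseteq\im(\tr)$. Since $\im(\tr)$ is a proper ideal of the local ring $R^G$, the reverse inclusion $\im(\tr)\subseteq\fn$ is automatic, so the condition is exactly $\im(\tr)=\fn$. The main obstacle is the identification $\ker(\tr)=R^G$, which genuinely uses both $\Char=2$ and $|G|=2$; once that is in hand, the two long exact sequences and the finite-length input for $R^G/\im(\tr)$ finish the proof essentially formally.
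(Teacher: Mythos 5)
Your route to the central isomorphism $\HH^2_{\fn}(R^G)\cong R^G/\im(\tr)$ is genuinely different from the paper's and is, to my mind, more elementary. The paper obtains it from the five-term exact sequence of the Grothendieck spectral sequence $\HH^q(G,\HH^p(\widehat{X},\mathcal{O}_X))\Rightarrow \HH^{p+q}(\widehat{Y},\mathcal{O}_Y)$ (Fact \ref{sp} and Proposition \ref{ob1}), using $\HH^2_{\fm}(R)=0$ to kill the middle term and then computing $\HH^1(G,R)=\ker(\tr)/\langle g(r)-r\rangle=R^G/\im(\tr)$ for the cyclic group in characteristic $2$. You instead exploit the same characteristic-$2$ observation $\ker(\tr)=R^G$ to build the short exact sequence $0\to R^G\to R\to\im(\tr)\to 0$, and then splice its local cohomology sequence with that of $0\to\im(\tr)\to R^G\to R^G/\im(\tr)\to 0$, feeding in $\HH^{<3}_{\fn}(R)=0$ (independence of base plus Cohen--Macaulayness of $R$) and the finite length of $R^G/\im(\tr)$ (which you correctly extract from $\Ht(\im(\tr))=\dim R$, i.e.\ Observation \ref{frsh}(ii)/Proposition \ref{ob1}(b)). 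The steps check out: you get $\HH^0_{\fn}(R^G)=\HH^1_{\fn}(R^G)=0$ and $R^G/\im(\tr)\cong\HH^1_{\fn}(\im(\tr))\cong\HH^2_{\fn}(R^G)$. What your approach buys is that the group cohomology and the spectral sequence disappear from the proof of the corollary itself (they survive only inside the black-boxed height computation); what it costs is that it is tied even more tightly to $|G|=2=\Char(R)$, since the exact sequence $0\to R^G\to R\to\im(\tr)\to 0$ has no analogue otherwise.

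One step is stated too loosely and needs repair: you call $\fn\cdot\HH^i_{\fn}(R^G)=0$ for $i<3$ ``the Buchsbaum criterion.'' That condition is quasi-Buchsbaumness, which in general is strictly weaker than Buchsbaumness, so as written your reverse implication ($\im(\tr)=\fn\Rightarrow$ Buchsbaum) only yields quasi-Buchsbaum. The gap closes in the present situation because you have shown that $\HH^i_{\fn}(R^G)$ vanishes for all $i$ other than $\depth(R^G)$ and $\dim(R^G)$ (namely $i=2$ and $i=3$, or $R^G$ is already Cohen--Macaulay), and for such rings quasi-Buchsbaum does imply Buchsbaum --- this is exactly Fact \ref{qb} of the paper, which its proof of the corollary invokes at the same point. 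Add that citation and your argument is complete; the forward implication is unaffected, since Buchsbaum always implies $\fn\HH^2_{\fn}(R^G)=0$.
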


Also, we extend and simplify some results of Ellingsrud and Skjelbred by  elementary methods:

\begin{corollary}
Let $(R,\fm)$ be a    local domain and  $G$  a finite group  acting wildly on $R$.
\begin{enumerate}
\item[i)] If $R$ is $\Se(2)$ then $R^G$ is $\Se(2)$.
\item[ii)]  Suppose  $\depth(R)>1$ and $\dim R>2$. Then   $R^G$ is not $\Se(n)$ for  all $n>2$.
\end{enumerate}
\end{corollary}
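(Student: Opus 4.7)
Both parts rest on one elementary fact about $C := R/R^G$ viewed as an $R^G$-module: \emph{$C$ is torsion-free over $R^G$}. Indeed, if $a \in R^G \setminus \{0\}$ and $c \in R$ satisfy $ac \in R^G$, then $G$-invariance of $a$ gives $a \cdot g(c) = g(ac) = ac$ for every $g \in G$, so $a(g(c)-c)=0$, and since $R$ is a domain $g(c)=c$ for all $g$; hence $c \in R^G$.

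For \emph{part (i)}, fix a prime $\fq \subset R^G$ with $\Ht \fq \geq 2$. In the integral extension $R^G \subset R$, incomparability forces every prime $\fp \subset R$ lying over $\fq$ to have $\Ht \fp \geq 2$, so the $\Se(2)$ hypothesis gives $\depth R_{\fp} \geq 2$ for each such $\fp$. The semi-local $R^G_\fq$-module $R_\fq := R \otimes_{R^G} R^G_\fq$ therefore satisfies $\HH^0_{\fq}(R_\fq) = \HH^1_{\fq}(R_\fq) = 0$. Applying $\HH^\bullet_{\fq}$ to $0 \to R^G_\fq \to R_\fq \to C_\fq \to 0$, the vanishing $\HH^0_{\fq}(C_\fq)=0$ (torsion-freeness) together with the depth bound above pinches $\HH^1_{\fq}(R^G_\fq)$ to zero; combined with $\HH^0_{\fq}(R^G_\fq)=0$ (domain), this gives $\depth R^G_\fq \geq 2$.

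For \emph{part (ii)}, the same argument at the maximal ideal $\fn$ of $R^G$ requires only $\depth R \geq 2$ at the unique maximal ideal of $R$, so it yields $\depth R^G \geq 2$. Since $\Ht \fn = \dim R^G = \dim R > 2$, to prove $R^G \notin \Se(n)$ for every $n > 2$ it is enough to exhibit $\HH^2_\fn(R^G) \neq 0$, i.e., to rule out $\depth R^G \geq 3$. Using Observation \ref{frsh}(ii), after replacing $G$ by a cyclic wild subgroup if $G$ itself is not cyclic, the image $I := \im(\tr)$ is $\fn$-primary in $R^G$, and proper because the trace is not surjective in the wild case; hence $R^G/I$ is a nonzero module of finite length. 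The long exact sequence of $0 \to I \to R^G \to R^G/I \to 0$ identifies $\HH^1_\fn(I) \cong R^G/I$ and $\HH^i_\fn(I) \cong \HH^i_\fn(R^G)$ for every $i \geq 2$. Coupling this with the long exact sequence of $0 \to \Ker(\tr) \to R \xrightarrow{\tr} I \to 0$ and the vanishing $\HH^1_\fn(R)=0$ embeds the finite-length module $R^G/I$ into $\HH^2_\fn(\Ker(\tr))$; a diagram chase — exploiting, in the equal-characteristic $p$ case, the inclusion $R^G \subseteq \Ker(\tr)$ that comes from $|G| = 0$ in $R$ — transports this class into $\HH^2_\fn(R^G)$, yielding $\depth R^G = 2$.

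The only substantive obstacle is this last diagram chase in part (ii); the torsion-free fact, the reduction to $\HH^2_\fn(R^G)$, and the translation of Observation \ref{frsh}(ii) are all formal.
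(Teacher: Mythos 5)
Your part (i) is correct and takes a genuinely different, and in fact simpler, route than the paper. The paper's proof uses that the action is free off the closed point to get that $R^G\to R$ is \'{e}tale (hence flat) at nonmaximal primes, then runs a prime-avoidance argument in the semilocal ring $R_{\fp}$ and pushes regular sequences down via norms $\nr(-)$ and Fact \ref{corob}. Your observation that $R/R^G$ is torsion-free over $R^G$ (a one-line consequence of $R$ being a domain) replaces all of that: the long exact sequence of $0\to R^G_{\fq}\to R_{\fq}\to C_{\fq}\to 0$ pinches $\HH^1_{\fq}(R^G_{\fq})$ between $\HH^0_{\fq}(C_{\fq})=0$ and $\HH^1_{\fq}(R_{\fq})=0$. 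This needs neither freeness nor flatness, which is consistent with (and sharper than) the paper's remark that item (i) holds under weaker hypotheses. One repair: incomparability only gives $\Ht(\fp)\leq\Ht(\fq)$ for $\fp$ lying over $\fq$; the inequality you need, $\Ht(\fp)\geq\Ht(\fq)$, comes from going-down, which does hold for $R^G\subset R$ because $G$ acts transitively on the fibers of $\Spec(R)\to\Spec(R^G)$. Without it, a maximal ideal of $R_{\fq}$ of height one would contribute a nonzero $\HH^1$ and break the argument, so this point is not cosmetic.

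Part (ii) has a genuine gap exactly where you flag "the only substantive obstacle." The reduction is fine: since $\dim R^G=\dim R>2$, it suffices to show $\HH^2_{\fn}(R^G)\neq 0$, and you correctly produce a nonzero finite-length module $R^G/\im(\tr)\cong\HH^1_{\fn}(\im\tr)$ embedded in $\HH^2_{\fn}(\Ker\tr)$. But the inclusion $R^G\subseteq\Ker(\tr)$ induces a map $\HH^2_{\fn}(R^G)\to\HH^2_{\fn}(\Ker\tr)$, and a nonzero class in the \emph{target} gives no information about the source: the long exact sequence of $0\to R^G\to\Ker(\tr)\to\Ker(\tr)/R^G\to 0$ shows that $\HH^2_{\fn}(R^G)=0$ is perfectly consistent with everything you have established (your class would then simply inject into $\HH^2_{\fn}(\Ker(\tr)/R^G)$). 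The missing input is group-cohomological, not module-theoretic: what the paper uses (via Fogarty's Proposition 4) is the spectral sequence $\HH^q(G,\HH^p(\widehat{X},\mathcal{O}_X))\Rightarrow\HH^{p+q}(\widehat{Y},\mathcal{O}_Y)$ attached to the \emph{free} action on the punctured spectrum, whose five-term sequence embeds $\HH^1(G,R)=\Ker(\tr)/(\sigma-1)R$ into $\HH^2_{\fn}(R^G)$, together with the nonvanishing of $\HH^1(G,R)$ for wild actions; note that $R^G/\tr(R)\neq 0$ does not formally imply $\Ker(\tr)/(\sigma-1)R\neq 0$ for an infinite module. Your exact sequences never see the torsor structure of $\widehat{X}\to\widehat{Y}$, which is precisely where wildness enters. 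Two further points: your appeal to Observation \ref{frsh}(ii) is a forward reference to Proposition \ref{ob1}(b), itself proved with the same spectral sequence, so the route is not actually more elementary; and replacing $G$ by a cyclic subgroup $H$ would bound $\depth(R^H)$, not $\depth(R^G)$ (though in Definition \ref{wi} wild groups are cyclic to begin with, so this reduction is unnecessary rather than wrong).
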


\section{Preliminary}

\textmd{Subsection 2.A:} A quick review of local cohomology.
Let $R$ be any commutative  ring with an ideal $\mathfrak a$ with a generating set
 $\underline{a}:=a_{1}, \ldots, a_{r}$.
 By $\HH_{\underline{a}}^{i}(M)$, we mean the $i$-th cohomology of the
\textit{$\check{C}ech$} complex of  a module $M$ with respect  to $\underline{a}$. This is independent of the choose
of the generating set. For simplicity, we denote it by $\HH_{\mathfrak a}^{i}(M)$.  Recall that    $\cd(\fa,M)$ is defined to
be the spermium of $i$'s such that $\HH^i_{\fa}(M)\neq 0$. We set $\grade(\fa,M):=\inf\{i:\HH^{i}_{\fa}(M)\neq 0\}$. We put $\depth(M):=\grade(\fm,M)$ when
$\fm$ is the unique (graded) maximal ideal. We say $R$ satisfies  Serre's condition $\Se(n)$ if $\depth(R_{\fp})\geq\min\{n,\dim R_{\fp}\}$ for all $\fp\in\Spec(R)$.
Recall that $\f_{\fa}(M):=\inf\{i:\HH^{i}_{\fa}(M) \text{ is not finitely
generated}\}$. Here, is the relations between these:$$\grade(\fa,R)\leq\f_{\fa}(R)\leq\Ht(\fa)\leq\cd(\fa)\leq\dim R,$$
provided $R$ is noetherian. The term local means noetherian and quasi-local.

\begin{fact}\label{gro}The following  assertions hold:
\begin{enumerate}
\item[i)]  Let $(A,\fm)$ be a local ring. Then $\HH^{\dim A}_{\fm}(A)$ is not finitely generated.
\item[ii)]  (Grothendieck's finiteness theorem) Let $A$ be a quotient of a noetherian regular  ring. Then $$\f_{\fa}(A)=\inf\{\depth(A_{\fq})+\Ht(\frac{\fa+\fq}{\fq}):\fq\in\Spec(A)\setminus\V(\fa)\}.$$
\item[iii)]  (Hartshorne-Lichtenbaum vanishing theorem) Let $A$ be analytically irreducible local ring and $\dim (A/\fa)>0$. Then $\HH^{\dim A}_{\fa}(A)=0$.
 \end{enumerate}
\end{fact}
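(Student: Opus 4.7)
All three assertions are classical; here is the plan for each.

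For (i), I assume $d := \dim A > 0$ (otherwise $A$ itself is finitely generated). The plan is to reduce to the case of a complete local domain and invoke Matlis duality. First, since $A \to \widehat{A}$ is faithfully flat and $\HH^d_\fm(A) \otimes_A \widehat{A} \cong \HH^d_{\fm\widehat{A}}(\widehat{A})$, finite generation descends under completion, so one may replace $A$ by $\widehat{A}$. Next, choose a minimal prime $\fp$ of $\widehat{A}$ with $\dim \widehat{A}/\fp = d$; the quotient map $\widehat{A} \twoheadrightarrow \widehat{A}/\fp$ induces a surjection on top local cohomology (the subsequent term in the long exact sequence vanishes by Grothendieck vanishing above $d$), so one may further replace $\widehat{A}$ by the complete local domain $\widehat{A}/\fp$. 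Finally, any finitely generated $\fm$-torsion module has finite length, hence so does its Matlis dual; but the Matlis dual of $\HH^d_\fm(A)$ is the canonical module $\omega_A$, and in a complete local domain $\dim \omega_A = d > 0$, a contradiction.

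For (ii), the easy inequality $\f_\fa(A) \le \inf\{\depth(A_\fq) + \Ht((\fa+\fq)/\fq) : \fq \in \Spec(A)\setminus\V(\fa)\}$ follows from the localization isomorphism $(\HH^i_\fa(A))_\fq \cong \HH^i_{\fa A_\fq}(A_\fq)$: non-finite generation of the latter forces non-finite generation of the former, and part (i) applied to a suitable quotient of $A_\fq$ produces non-finite generation at precisely the cohomological level $\depth(A_\fq) + \Ht((\fa+\fq)/\fq)$. The harder reverse inequality I would prove by Noetherian induction on $\dim A$, cutting with a nonzerodivisor $x \in \fa$ chosen modulo a well-placed associated prime and using the long exact sequence from $0 \to A \xrightarrow{x} A \to A/xA \to 0$ to transfer the finiteness information of $\HH^\bullet_\fa(A/xA)$ to $\HH^\bullet_\fa(A)$. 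The inductive bookkeeping here, aligning the depth and height contributions after cutting, is the main technical obstacle.

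For (iii), the plan is to reduce to the case where $A$ is a complete local domain (via analytic irreducibility and faithfully flat descent of local cohomology) and then to a single nonmaximal prime $\fp$. Since $\HH^d_\fa = \HH^d_{\rad\fa}$, repeated application of the Mayer--Vietoris sequence (with Grothendieck vanishing above $d$ killing the error term $\HH^{d+1}_{\fp_i + \fp_j}$) reduces the problem to showing $\HH^d_{\fp_i}(A) = 0$ for each minimal prime $\fp_i$ of $\fa$; each such $\fp_i$ is nonmaximal since $\dim A/\fa > 0$. The remaining vanishing $\HH^d_\fp(A) = 0$ for a nonmaximal prime in a complete local domain is the substantive core of Hartshorne--Lichtenbaum; I would prove it by a direct computation with the \v{C}ech complex combined with local duality, exploiting that the zero ideal is the only associated prime of $A$ to rule out top-degree classes. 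This last step is the principal difficulty.
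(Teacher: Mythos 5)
The paper itself offers no proof of this Fact: it is quoted as classical, with the references \cite{sga2}, \cite{41} and \cite{BS} standing in for proofs, so there is no in-paper argument to match yours against. Judged on its own terms, your part (i) is complete and correct: passing to $\widehat{A}$, then to the complete local domain $\widehat{A}/\fp$ via the surjection $\HH^{d}_{\fm}(\widehat{A})\twoheadrightarrow \HH^{d}_{\fm}(\widehat{A}/\fp)$ (the tail vanishing by Grothendieck vanishing above $d$), and then the Matlis-duality endgame --- a finitely generated $\fm$-torsion $\HH^{d}_{\fm}(A)$ would have finite length, forcing its dual $\omega_A$ to have finite length, against $\dim\omega_A=d>0$ --- is a standard and valid argument. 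You are also right to flag that $d>0$ must be assumed, a hypothesis the paper leaves implicit: for Artinian $A$ the statement is false, since $\HH^{0}_{\fm}(A)=A$.

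Parts (ii) and (iii), however, contain genuine gaps, and in (ii) even your ``easy'' half is misdirected as written: for $\fq\in\Spec(A)\setminus\V(\fa)$ one has $\fa A_{\fq}=A_{\fq}$, so $\HH^{i}_{\fa A_{\fq}}(A_{\fq})=0$ for all $i$ and the localization isomorphism at $\fq$ yields nothing. The correct move is to localize at a prime $\fp\supseteq\fa+\fq$ realizing $\Ht((\fa+\fq)/\fq)$ and run an induction on $\depth A_{\fq}$ there (cf. \cite[Chapter 9]{BS}). More importantly, in both (ii) and (iii) the steps you defer --- the reverse inequality in Grothendieck's finiteness theorem, and the vanishing $\HH^{d}_{\fp}(\widehat{A})=0$ for a nonmaximal prime of a complete local domain --- are the entire content of these theorems. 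Your reductions are fine but peripheral: the Mayer--Vietoris induction with Grothendieck vanishing killing $\HH^{d+1}$ does reduce (iii) to a single minimal prime, and each minimal prime of $\fa$ is indeed nonmaximal, since in a local ring $\fm\in\min(\fa)$ would force $\V(\fa)=\{\fm\}$, i.e. $\dim A/\fa=0$. But ``a direct computation with the \v{C}ech complex combined with local duality'' is a placeholder, not a proof: the Hartshorne--Lichtenbaum core requires a delicate argument (see \cite[8.2.1]{BS}), and the finiteness theorem requires Faltings-style annihilator techniques exploiting the quotient-of-regular hypothesis (see \cite[9.5.2]{BS}); neither falls out of the tools you name. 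Since the paper invokes these results by citation, the honest options are to cite them likewise or to supply those cores in full; your proposal does neither for (ii) and (iii), so only (i) stands as proved.
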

For more details on local cohomology modules see the books \cite{sga2}, \cite{41} and \cite{BS}.

\textmd{Subsection 2.B:} A quick review of local algebra.  A finite sequence $\underline{x}:=x_{1},\ldots,x_{r}$ of
elements of $(A,\fm)$ is called weak regular sequence if $x_i$ is a
nonzero-divisor on $A/(x_1,\ldots, x_{i-1})$ for $i=1,\ldots,r$.
Then $\depth(A)$ coincides with the supremum of the
lengths of all weak regular sequences contained in $\fm$.
Recall that a local ring $(A,\fm)$ is called Cohen-Macaulay if $\depth A=\dim A$.
Also,  $(A,\fm)$ is  called generalized Cohen-Macaulay if
$\ell(\HH^{i}_{\fm}(A))<\infty$ for all $i<\dim A$.
Recall that a sequence $x_{1},\ldots ,x_{r} \subset \fm$ is called a weak sequence
 if $\fm ((x_{1},\ldots ,x_{i-1})\colon x_{i})\subseteq(x_{1},\ldots ,x_{i-1})$ for all $i$.
The ring $R$ is called \textit{Buchsbaum} if every system of parameters is a weak sequence.
This implies that
$\fn\HH^i_{\fm}(R)=0$ for all $i<\dim R^G$. This property is called quasi-Buchsbaum. In general quasi-Buchsbaum
rings are not Buchsbaum. Despite of this, there are cases for which we can deduce the Buchsbaum property from  quasi-Buchsbaum:

\begin{fact}\label{qb}
Suppose $R$ is quasi-Buchsbaum and that $\HH^i_{\fm}(R)=0$ for all $i\neq\depth (R)$ and $i\neq\dim (R)$. Then is Buchsbaum.
\end{fact}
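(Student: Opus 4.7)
The plan is to verify the Buchsbaum property through the weak-sequence criterion: it suffices to show that every system of parameters $x_1,\ldots,x_d$ of $R$ satisfies $\fm \cdot ((x_1,\ldots,x_{j-1}):_R x_j) \subseteq (x_1,\ldots,x_{j-1})$ for every $j$. I would first observe that the hypothesis forces $R$ to be generalized Cohen-Macaulay, since each $\HH^i_\fm(R)$ with $i<d$ is either zero or (when $i = t:=\depth R$) annihilated by $\fm$ and finitely generated over $R/\fm$, hence of finite length. In particular we may apply the Stückrad-Vogel numerical apparatus for generalized Cohen-Macaulay rings.

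The core mechanism is the long exact sequence of local cohomology associated with $0 \to R \xrightarrow{x} R \to R/xR \to 0$ for a parameter $x \in \fm$. Because $\fm$ annihilates $\HH^i_\fm(R)$ for all $i<d$, multiplication by $x$ is the zero map on these modules, so for each $i \leq d-2$ the long exact sequence breaks into a short exact sequence
\[
0 \to \HH^i_\fm(R) \to \HH^i_\fm(R/xR) \to \HH^{i+1}_\fm(R) \to 0,
\]
and at the top we get $\HH^{d}_\fm(R/xR) \cong \HH^{d}_\fm(R)/x\HH^{d}_\fm(R)$ with a corresponding extension at $i=d-1$. Iterating this along the successive quotients $R_j := R/(x_1,\ldots,x_j)$, I would compute $\ell(R/(x_1,\ldots,x_d)) - \e((x_1,\ldots,x_d);R)$ and show that it is equal to the intrinsic expression $\sum_{i<d}\binom{d-1}{i}\,\ell(\HH^i_\fm(R))$, manifestly independent of the chosen system of parameters. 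By the Stückrad-Vogel numerical characterization, this equality for every s.o.p. is equivalent to $R$ being Buchsbaum.

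The main technical obstacle is ensuring that the iterated short-exact-sequence decomposition really does persist at every stage of the induction on $j$: a priori, passing from $R$ to $R/xR$ could spread the local cohomology across more than two degrees and destroy the quasi-Buchsbaum property on the quotient. Here the two-degree concentration is used sharply: in the short exact sequence displayed above, each $\HH^{i+1}_\fm(R)$ is either zero or equal to the single nonzero $\fm$-annihilated module $\HH^t_\fm(R)$, so $\HH^i_\fm(R/xR)$ remains annihilated by $\fm$ for $i < d-1$, which keeps the induction alive and prevents the connecting maps from contributing uncontrollable pieces. An alternative and perhaps cleaner route is to invoke Schenzel's characterization via standard systems of parameters and to verify the standardness conditions directly from the two-degree concentration, which reduces the problem to a bookkeeping exercise on the same local cohomology long exact sequences.
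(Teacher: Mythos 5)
The paper itself states this Fact without proof, citing St\"uckrad--Vogel, so your argument has to stand on its own --- and it has a genuine gap, precisely at the point you flag as ``the main technical obstacle.'' Write $t=\depth R$, $d=\dim R$, $R_j=R/(x_1,\dots,x_j)$. Your observation is correct for the \emph{first} quotient: in each sequence $0\to\HH^i_\fm(R)\to\HH^i_\fm(R_1)\to(0:x_1)_{\HH^{i+1}_\fm(R)}\to 0$ with $i\le d-2$, at most one end is nonzero (the left end only for $i=t$, the right end only for $i=t-1$), so $\HH^i_\fm(R_1)$ is either $0$ or a single copy of $\HH^t_\fm(R)$ and is still killed by $\fm$. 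But this very computation shows that $R_1$ now has \emph{two} nonvanishing cohomology modules below the top, in degrees $t-1$ and $t$ (in the only nontrivial range $1\le t\le d-2$, $d\ge 3$). At the next stage the sequence in degree $t-1$ reads $0\to\HH^{t-1}_\fm(R_1)\to\HH^{t-1}_\fm(R_2)\to\HH^{t}_\fm(R_1)\to 0$ with \emph{both} ends nonzero, and an extension of two $\fm$-annihilated modules is in general only annihilated by $\fm^2$. So from $j=2$ on you cannot conclude that multiplication by $x_{j+1}$ kills the lower local cohomology of $R_j$; the long exact sequences no longer break into short ones, and the bookkeeping meant to yield $\ell(R/\underline{x}R)-\e(\underline{x};R)=\sum_{i<d}\binom{d-1}{i}\ell(\HH^i_\fm(R))$ collapses. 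This extension problem is exactly why quasi-Buchsbaum is strictly weaker than Buchsbaum in general; the two-degree hypothesis postpones it by one step, it does not remove it. (Your induction does close in the special case $t=d-1$, where $R_1$ again has cohomology only in degrees $\depth R_1$ and $\dim R_1$; and there is the separate minor point that for $t=0$ the map $R\stackrel{x_1}\lo R$ is not injective, so one must first pass to $R/\HH^0_\fm(R)$.) The alternative route you mention --- verifying standardness of every system of parameters directly --- runs into the identical extension problem, since standardness is the condition $\underline{x}\,\HH^i_\fm(R_j)=0$ for $i+j<d$.

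The missing idea is to abandon the iterated quotients and use the surjectivity criterion: $R$ is Buchsbaum if the natural maps $\Ext^i_R(k,R)\to\HH^i_\fm(R)$ are surjective for all $i<d$. Because $\HH^i_\fm(R)=0$ for $i<t$, the spectral sequence $\Ext^p_R(k,\HH^q_\fm(R))\Rightarrow\Ext^{p+q}_R(k,R)$ (or a truncation argument on $R\Gamma_\fm(R)$) shows that $\Ext^t_R(k,R)\to\Hom_R(k,\HH^t_\fm(R))$ is automatically surjective; quasi-Buchsbaumness gives $\Hom_R(k,\HH^t_\fm(R))=\HH^t_\fm(R)$, and surjectivity in the remaining degrees $i<d$, $i\ne t$, is vacuous because the targets vanish. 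This is where the two-degree concentration does real work, and it is the content of the St\"uckrad--Vogel result the paper is quoting.
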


For more details on Buchsbaum rings (resp. local algebra), see the book \cite{bus} (resp. \cite{Mat}).

\textmd{Subsection 2.C:} A quick review of invariant theory.
Let $A:=\bigoplus_{n\geq0}A_n$ be the  polynomial algebra of a field  $\mathbb{F}$.
Suppose $G$ acts   on $A$ by degree-preserving  homomorphisms.
This means that $g(A_n) \subseteq A_n$ for all $g \in G$ and $n \in \mathbb{N}$. Then $$R:=A^G = \{a \in A : g(a)=a \quad \forall g \in G\}$$ is the (graded) ring of invariants.
The notation $\fa\lhd_hR$ stands for homogeneous ideal of $R$  and $\fm:= \bigoplus_{n>0}R_n$  is the irrelevant ideal.
Also, if $G$ is a finite group, then by the very first result of invariant theory,  $R$ is  finitely generated as an $R_0$-algebra (see \cite[Theorem 2.1.4]{ns}).
This  remarkable result is due to \textit{Emmy Noether}.
If $|G|$ is invertible in $R$ the action is called non-modular.
If $|G|$ is not invertible in $R$ the action is called \textit{modular}.

\begin{fact}
Let $S$ be a normal domain and $\mathcal{G}\subset\Aut(S)$ be any group. Then $S^{\mathcal{G}}$ is normal.
\end{fact}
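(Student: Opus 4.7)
The plan is to verify the definition of normal directly: let $R := S^{\mathcal{G}}$, and show that $R$ is integrally closed in its own field of fractions $K := \mathrm{Frac}(R)$. Note that $R$ is automatically a domain, being a subring of the domain $S$, so this definition is applicable. Let $L := \mathrm{Frac}(S)$, and observe that $R\subseteq S$ induces an inclusion $K\subseteq L$.

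Next I would pick an arbitrary $x\in K$ which is integral over $R$, and aim to show $x\in R$. The first step is to promote integrality: since $R\subseteq S$, the element $x$ is integral over $S$ as well, and it lives in $L$; by the hypothesis that $S$ is normal (integrally closed in $L$), this forces $x\in S$. So the task reduces to proving the equality $S\cap K = R$.

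The remaining step is the main content: I would extend each $g\in\mathcal{G}$, which is by assumption a ring automorphism of $S$, uniquely to a field automorphism $\widetilde{g}$ of $L=\mathrm{Frac}(S)$ by the formula $\widetilde{g}(a/b)=g(a)/g(b)$. Since $g$ fixes $R$ pointwise, $\widetilde{g}$ fixes every element of $K=\mathrm{Frac}(R)$ (being a quotient of elements of $R$). Consequently, if $x\in S\cap K$, then $\widetilde{g}(x)=x$ for every $g\in\mathcal{G}$; but on $S$ the map $\widetilde{g}$ agrees with $g$, so $g(x)=x$ for all $g$, i.e.\ $x\in S^{\mathcal{G}}=R$. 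This gives $S\cap K\subseteq R$, and the reverse inclusion is obvious, completing the proof.

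I do not anticipate a serious obstacle; the only point requiring a brief sanity check is the well-definedness and pointwise-fixing property of the extension $\widetilde{g}$ on $K$, which is purely formal once one writes elements of $K$ as ratios $a/b$ with $a,b\in R$.
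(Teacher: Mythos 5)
Your proof is correct. Note that the paper states this as a \emph{Fact} and gives no proof at all (it is quoted as a standard result of classical invariant theory), so there is nothing to compare against; your argument is the standard one. The two steps are both sound: integrality of $x\in K=\mathrm{Frac}(S^{\mathcal{G}})$ over $S^{\mathcal{G}}$ passes to integrality over $S$, whence $x\in S$ by normality of $S$; and the identity $S\cap K=S^{\mathcal{G}}$ follows because every $g\in\mathcal{G}$ extends to an automorphism of $\mathrm{Frac}(S)$ fixing $K$ pointwise. It is worth observing that your argument, like the statement, requires no finiteness hypothesis on $\mathcal{G}$ and no Noetherian hypothesis on $S$, which is exactly the level of generality the paper asserts.
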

 Let $S$ be any ring and $\mathcal{G}\subset\Aut(S)$ be a finite group. There is an  operation called transfer $\tr:S \lo S^\mathcal{G}$. It sends $a\in S$
to $\sum_{g\in G} ga$.
 If  $S$ is  a domain then $\tr$ is nonzero. This follows by a result of Dedekind (see \cite[Lemma 2.2.3]{ns}).
If $(S,\fn,k)$ is a
local ring, Fogarty  remarked   that $\im(\tr)$ is  proper  provided $\mathcal{G}$ acts trivially on $k$.

\begin{fact}\label{max}
Let $(S,\fm)$ be quasilocal and $\mathcal{G}\subset\Aut(S)$ be a finite group. Then $S^{\mathcal{G}}$ is quasilocal.
\end{fact}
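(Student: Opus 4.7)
The plan is to reduce the statement to a standard fact about integral extensions. The essential input is that, because $\mathcal{G}$ is finite, every element $a\in S$ is a root of the monic polynomial $\prod_{g\in \mathcal{G}}(T-g(a))\in S^{\mathcal{G}}[T]$, so the extension $S^{\mathcal{G}}\hookrightarrow S$ is integral. Given this, I want to show that every maximal ideal of $S^{\mathcal{G}}$ is the contraction of $\fm$, which forces uniqueness.

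More concretely, I would proceed in three short steps. First, record the integrality of $S$ over $S^{\mathcal{G}}$ from the explicit monic polynomial above. Second, let $\fn$ be any maximal ideal of $S^{\mathcal{G}}$; by lying-over for integral extensions there exists $\fp\in\Spec(S)$ with $\fp\cap S^{\mathcal{G}}=\fn$, and by incomparability such a $\fp$ is itself maximal in $S$. Since $S$ is quasilocal, the only choice is $\fp=\fm$, and therefore $\fn=\fm\cap S^{\mathcal{G}}$. Third, conclude that $\fm\cap S^{\mathcal{G}}$ is the unique maximal ideal of $S^{\mathcal{G}}$, so $S^{\mathcal{G}}$ is quasilocal.

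There is no real obstacle: the only point to be slightly careful about is that the statement is made without a Noetherian hypothesis on $S$, but lying-over and incomparability hold for any integral extension of commutative rings, so no adjustment is necessary. In particular one does not need $S^{\mathcal{G}}$ to be Noetherian, which is convenient since Emmy Noether's finite-generation theorem has been invoked earlier only in the graded finite-group setting.
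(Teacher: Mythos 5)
Your proof is correct and follows essentially the same route as the paper's: observe that $S^{\mathcal{G}}\hookrightarrow S$ is integral, use lying-over to find a prime of $S$ over a given maximal ideal of $S^{\mathcal{G}}$, note that such a prime must be maximal and hence equal to $\fm$, and conclude that every maximal ideal of $S^{\mathcal{G}}$ is $\fm\cap S^{\mathcal{G}}$. The only difference is cosmetic: you make the integrality explicit via the monic polynomial $\prod_{g\in\mathcal{G}}(T-g(a))$, whereas the paper simply asserts it and cites Matsumura for the maximality transfer.
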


\begin{proof}
 Let $\fn_1$ and $\fn_2$ be two maximal ideals of $S^{\mathcal{G}}$. The extension $S^{\mathcal{G}}\to S$ is integral. By lying over
 there are $P_i$ lying over $\fn_i$. Since $\fn_i$ is maximal, $P_i=\fm$ (see \cite[Lemma 2]{Mat}). So, $\fn_1=\fm\cap S^{\mathcal{G}}=\fn_2$.
 \end{proof}
For more details
on invariant theory of finite groups see the  book \cite{ns} by  Neusel and Smith.

\section{the global results}

 \begin{discussion}\label{dis}  (Kemper) Let
 $G\to \GL(V, \mathbb{F})$ be a representation of a finite group $G$ and denote the invariant ring by $R$. By Cohen-Macaulay defect we mean $\cmdef(R):=\dim(R)-\depth(R)$.
 For each $m$, let  $\loc(\cmdef>m):=\{\fp:\cmdef(R_{\fp})>m\}.$ In view of \cite[Proposition 3.1]{k}, $$0<\dim\left(\loc(\cmdef>m)\right)<\dim (V)-m-1,$$
provided $\cmdef(R)>m$.
 \end{discussion}
Set $X:=\Spec(-)$ and denote the punctured spectrum by  $\widehat{X}:=X\setminus\{\fm\}$. In general, it is not true to extend a property $\mathcal{P}$ from $\widehat{X}$  to  $X$. For example,  there are 3-dimensional normal rings that are not Cohen-Macaulay. In particular, $\mathcal{P}:=\textit{Cohen-Macaulay}$ does not extend
from $\widehat{X}$ to  $X$.

\begin{proposition} \label{kem}Let
$G\to \GL(n, \mathbb{F})$ be a representation of a finite group $G$ and denote the invariant ring by $R$.  The following are equivalent:
 \begin{enumerate}
\item[$\mathrm{(i)}$]   $R$ is Cohen-Macaulay,
\item[$\mathrm{(ii)}$] $R$ is Buchsbaum,
\item[$\mathrm{(iii)}$] $R$ is quasi-Buchsbaum,
\item[$\mathrm{(iv)}$] $R$ is generalized Cohen-Macaulay,
\item[$\mathrm{(v)}$] $R$ is Cohen-Macaulay over the punctured spectrum.
 \end{enumerate}
\end{proposition}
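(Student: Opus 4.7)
The plan is to close the cycle (i) $\Rightarrow$ (ii) $\Rightarrow$ (iii) $\Rightarrow$ (iv) $\Rightarrow$ (v) $\Rightarrow$ (i). Four of the five arrows are either definitional or follow by a short application of the tools recorded in Section~2, while the real content sits in (v) $\Rightarrow$ (i), which I will extract from Kemper's estimate in Discussion~\ref{dis}.

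The implication (i) $\Rightarrow$ (ii) is the classical fact that every Cohen-Macaulay ring is Buchsbaum, and (ii) $\Rightarrow$ (iii) is built into the definition. For (iii) $\Rightarrow$ (iv) I would use that in the graded setting each $\HH^i_{\fm}(R)$ is $*$-Artinian; quasi-Buchsbaumness forces $\fm\,\HH^i_{\fm}(R)=0$, so $\HH^i_{\fm}(R)$ is an Artinian module over the field $R/\fm$, hence a finite-dimensional $\mathbb{F}$-vector space and in particular of finite length. For (iv) $\Rightarrow$ (v) I would feed the equality $\f_{\fm}(R)=\dim R$ (a reformulation of generalized Cohen-Macaulayness, combined with Fact~\ref{gro}(i)) into Grothendieck's finiteness theorem, Fact~\ref{gro}(ii); since the invariant ring is a finite extension of a polynomial ring and hence universally catenary, the resulting inequality $\dim R\leq \depth(R_{\fq})+\Ht(\fm/\fq)$ for every $\fq\neq\fm$ forces $\depth(R_{\fq})\geq \Ht(\fq)$, i.e.\ $R_{\fq}$ is Cohen-Macaulay.

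The interesting implication is (v) $\Rightarrow$ (i). I would argue by contradiction: suppose $R$ is Cohen-Macaulay on the punctured spectrum but $\cmdef(R)>0$. Applying Discussion~\ref{dis} with $m=0$ yields $\dim(\loc(\cmdef>0))>0$, so there must exist some $\fq\in\loc(\cmdef>0)$ that is not the irrelevant maximal ideal. On the other hand, hypothesis (v) forces $\loc(\cmdef>0)\subseteq\{\fm\}$, a zero-dimensional subset of $\Spec(R)$. This contradiction gives $\cmdef(R)=0$, i.e.\ $R$ is Cohen-Macaulay.

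The hard part is, in effect, trivial once Discussion~\ref{dis} is invoked: Kemper's inequality does all the work at the critical value $m=0$, and the remaining arrows only require translating between the graded-local vocabulary (quasi-Buchsbaum, generalized Cohen-Macaulay) and the localization-theoretic formulation used in (v). No further machinery is needed to close the cycle.
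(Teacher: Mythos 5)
Your proposal is correct and follows essentially the same route as the paper: the decisive implication $\mathrm{(v)}\Rightarrow\mathrm{(i)}$ is handled identically, by feeding $m=0$ into Kemper's estimate from Discussion~\ref{dis} to produce a non-maximal prime with positive Cohen--Macaulay defect. The only difference is that you spell out the routine arrows (the vector-space argument for $\mathrm{(iii)}\Rightarrow\mathrm{(iv)}$ and the use of Fact~\ref{gro}(ii) plus catenarity for $\mathrm{(iv)}\Rightarrow\mathrm{(v)}$), which the paper simply asserts hold over any noetherian ring.
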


\begin{proof}
The implications $\mathrm{(i)}\Rightarrow\mathrm{(ii)}\Rightarrow\mathrm{(iii)}\Rightarrow\mathrm{(iv)}\Rightarrow\mathrm{(v)}$ are true over any commutative noetherian rings.
The implications $\mathrm{(ii)}\Rightarrow\mathrm{(i)}$ is due to Kemper. Let us repeat its argument to deduce  $\mathrm{(i)}$ from  $\mathrm{(v)}$:
Suppose $R$ is  generalized   Cohen-Macaulay. We are going to show it is $R$ is Cohen-Macaulay.
Suppose on the contradiction that $R$ is not Cohen-Macaulay. Then $\cmdef(R)>0$. We apply Discussion \ref{dis} for $m=0$
to find a prime $\fp$ of height $n-1$ such that $\fp\in\loc(\cmdef>0)$. This means that $R_{\fp}$ is not Cohen-Macaulay. This contradicts
the Cohen-Macaulay property over $\widehat{X}$.
 \end{proof}

 The module version of Proposition  \ref{kem} is not true:
Let $G$ be the trivial group act on $R:=k[x,y,z,w]$. So, $R^G=R$. In view of \cite{eg} there is a 
 prime ideal $\fp$ of height two such that $\HH^1_{\fm}(R/\fp)=R/\fm$. Set $M:=R/ \fp$. Since $\fp$ is prime,
$\HH^0_{\fm}(R/\fp)=0$. Note that $\dim M=2$. So, $M$ is generalized Cohen-Macaulay. Clearly, $M$ is not Cohen-Macaulay.

\begin{corollary}Let
$G\to \GL(n, \mathbb{F})$ be a representation of a finite  group $G$  and denote the invariant ring by $R$. If $R$
is $\Se(n-1)$, then $R$  is Cohen-Macaulay. In particular, if $n=3$ then
 $\mathbb{F}[x,y,z]^G$ is Cohen-Macaulay\footnote{Smith's argument is as follows:
First, he reduces to the prime characteristic case
 by applying the famous theorem of  Hochster and Eagon.
Second, he uses the following  result:
 If $\mathbb{F}[X,\ldots]^{\Syl_p(G)}$ is Cohen-Macaulay, then $\mathbb{F}[X,\ldots]^G)$
is Cohen-Macaulay (this is the main result of
 "Rings of invariants and p-Sylow subgroups" by Campbell, Hughes, and Pollack).
 Finally, he applies the mentioned result of Ellingsrud and Skjelbred
  to deduce that $\depth(\mathbb{F}[X,Y,Z]^{\Syl_p(G)})\geq3$.}.
\end{corollary}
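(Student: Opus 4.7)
The plan is to reduce the claim to Proposition \ref{kem} by showing that the hypothesis $\Se(n-1)$ already forces Cohen-Macaulayness on the punctured spectrum; once that is in place, condition $\mathrm{(v)}\Rightarrow\mathrm{(i)}$ of Proposition \ref{kem} finishes the job.

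First I would fix the basic setup: $R=\mathbb{F}[V]^G$ is a finitely generated graded $\mathbb{F}$-algebra with $\dim R=\dim V = n$ (because $G$ is finite and acts linearly), it is a domain, and since it is a subring of a polynomial ring which is integral over it, $R$ is catenary and equidimensional. In particular, every non-maximal homogeneous prime $\fp$ satisfies $\Ht(\fp)\le n-1$, and hence $\dim R_\fp = \Ht(\fp) \le n-1$. The $\Se(n-1)$ hypothesis then yields
\[
\depth R_\fp \;\ge\; \min\{n-1,\dim R_\fp\}\;=\;\dim R_\fp,
\]
so $R_\fp$ is Cohen-Macaulay. A standard localization argument extends this to non-homogeneous primes (if $\fq\neq\fm$ then $\fq^{*}\subsetneq\fm$ is a non-maximal homogeneous prime and $R_\fq$ is a localization of the Cohen-Macaulay ring $R_{\fq^{*}}$). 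Thus $R$ is Cohen-Macaulay over the punctured spectrum $\widehat{X}$, i.e.\ condition $\mathrm{(v)}$ of Proposition \ref{kem} holds, and that proposition gives $\mathrm{(v)}\Rightarrow\mathrm{(i)}$, so $R$ is Cohen-Macaulay.

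For the $n=3$ case I would observe that the invariant ring of a finite group acting on a polynomial ring is a normal domain (the cited Fact in Subsection 2.C), hence satisfies Serre's condition $\Se(2)=\Se(n-1)$; the general statement then applies. I do not anticipate any real obstacle: the whole content is packed into Proposition \ref{kem}, and the task here is just to verify that the numerical gap between $\Se(n-1)$ and full Cohen-Macaulayness lives only at the irrelevant maximal ideal, which is precisely the gap that Proposition \ref{kem} closes for modular invariants. The only point requiring a touch of care is making sure the reduction to homogeneous primes is legitimate, but this is routine for finitely generated graded algebras.
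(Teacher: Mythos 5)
Your proposal is correct and rests on the same engine as the paper's proof: Kemper's bound in Discussion \ref{dis}, which the paper invokes directly (with $m=0$) to produce a height $n-1$ prime $\fp$ with $\depth(R_{\fp})<n-1=\min\{n-1,\dim R_{\fp}\}$, contradicting $\Se(n-1)$, and which you reach instead by first verifying condition $\mathrm{(v)}$ of Proposition \ref{kem}. The paper's direct contradiction is marginally cleaner because it never has to discuss non-homogeneous primes, whereas your route through ``Cohen--Macaulay on the punctured spectrum'' forces you to handle the height-$n$ non-graded maximal ideals via the reduction $\fq\mapsto\fq^{*}$ --- which works, though note that $R_{\fq^{*}}$ is a localization of $R_{\fq}$ rather than the other way around, so the correct citation is the standard fact that $R_{\fq}$ is Cohen--Macaulay if and only if $R_{\fq^{*}}$ is; the treatment of the case $n=3$ via normality and $\Se(2)$ is identical to the paper's.
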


\begin{proof}
Suppose on the contradiction that $R$ is not Cohen-Macaulay, e.g.,
$\cmdef(R)>0$. In the light of Discussion \ref{dis}, $\dim\left(\loc(\cmdef>0)\right)\neq 0$.
There is $\fp$ of height $n-1$ such that $\cmdef(R_{\fp})>0$ and so $\depth(R_{\fp})<n-1= \min\{n-1,\dim R_{\fp}\}$. This
contradicts $\Se(n-1)$.

Now we prove the particular case:
By a folklore result of Serre, noetherian normal rings are $\Se(2)$ (see \cite[Theorem 23.8]{Mat}). The claim follows by the first part.
\end{proof}

Recall that $G$ is called $p$-group if $|G|=p^i$ for some $i$.

\begin{corollary}Let
$G\to \GL(n, \mathbb{F}_p)$ be a representation of a finite  $p$-group $G$  and denote the invariant ring by $R$. Suppose $n<6$. If $R$
is $\Se(3)$, then $R$  is Cohen-Macaulay and hence Gorenstein.
\end{corollary}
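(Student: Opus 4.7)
The plan is to split according to $n$. When $n \leq 4$, $\Se(3)$ implies $\Se(n-1)$, so Corollary 3.4 already gives that $R$ is Cohen-Macaulay (and the Gorenstein conclusion then comes for free, see below). The only genuinely new case is $n = 5$, and this is where the hypothesis that $G$ is a $p$-group plays a role.

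For $n = 5$, assume for contradiction that $R$ is not Cohen-Macaulay. The first step is to show $\cmdef R \leq 1$ by applying Discussion 3.2 at level $m=1$. If $\cmdef R > 1$, Discussion 3.2 yields $0 < \dim\bigl(\loc(\cmdef > 1)\bigr) < n - 2 = 3$. On the other hand, $\Se(3)$ forces any prime $\fp$ with $\cmdef R_{\fp} > 1$ to satisfy $\depth R_{\fp} \geq 3$ together with $\dim R_{\fp} - \depth R_{\fp} > 1$, whence $\dim R_{\fp} \geq 5$ and so $\fp = \fm$. Thus $\loc(\cmdef > 1) \subseteq \{\fm\}$ has dimension $0$, contradicting positivity. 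Therefore $\depth R \geq 4$.

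The second step is to exclude the subcase $\depth R = 4$; this is where the $p$-group hypothesis is essential. Because every element of $G$ has $p$-power order and $\det\colon \GL(V, \mathbb{F}_p) \to \mathbb{F}_p^{\times}$ has image of order dividing $p - 1$, each $g \in G$ acts with $\det g = 1$, i.e.\ $G \subseteq \mathrm{SL}(V)$. I would then invoke a modular analogue of Watanabe's theorem (due to Braun), which under $G \subseteq \mathrm{SL}(V)$ gives a self-duality constraint on the top local cohomology of $R$; combined with the partial vanishing $H^i_{\fm}(R) = 0$ for $i < 4$ obtained in step one, this rules out $H^{4}_{\fm}(R) \neq 0$, and we conclude that $R$ is CM. The final ``hence Gorenstein'' assertion then follows from CM together with $G \subseteq \mathrm{SL}(V)$.

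The main obstacle is precisely the second step: $\Se(3)$ combined with Discussion 3.2 only pins down $\depth R \geq 4$, and since Bertin's $4$-dimensional example shows that $G \subseteq \mathrm{SL}$ alone does not force CM in the modular setting, the argument must genuinely exploit the simultaneous constraints $n = 5$, $\Se(3)$, and the $\mathrm{SL}$-inclusion, rather than any one of them in isolation.
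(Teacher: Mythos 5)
Your reduction to $n=5$ and your first step are sound and essentially coincide with the paper's argument: the paper also applies Discussion \ref{dis} at level $m=1$ (after splitting into $\depth(R)=3$ and $\depth(R)\geq 4$) to find a height-$4$ prime $\fp$ with $\cmdef(R_{\fp})>1$ and contradict $\Se(3)$, forcing $\depth(R)\geq 4$. The problem is your second step, which you yourself flag as the obstacle: it is a genuine gap, and the tool you reach for is not the right one. The paper's actual mechanism is (a) invariant rings of $p$-groups over $\mathbb{F}_p$ are $\UFD$s (\cite[Corollary 1.7.4]{ns}, since $\Hom(G,\mathbb{F}_p^{\ast})$ is trivial); (b) $\Se(3)$ together with $\depth(R)=4$ yields the Hartshorne--Ogus \emph{C-property} $\depth(R_{\fp})\geq\min\{\tfrac{\Ht(\fp)}{2}+1,\dim R_{\fp}\}$ at \emph{every} prime, not just at $\fm$; and (c) \cite[Corollary 1.8]{HO}: a local factorial quotient of a regular local ring with the C-property is Cohen--Macaulay. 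So the $p$-group hypothesis enters through factoriality, and the full pointwise strength of $\Se(3)$ is consumed by the C-property.

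Your substitute --- $G\subseteq \mathrm{SL}(V)$ plus a ``self-duality constraint on the top local cohomology'' --- does not close the case $\depth(R)=4$. First, Braun's theorem (the modular Watanabe analogue) requires $G$ to contain no pseudo-reflections, and modular $p$-groups inside $\mathrm{SL}(V)$ typically contain transvections, which are pseudo-reflections of determinant $1$; so the hypothesis is not available for free. Second, even granting the conclusion (quasi-Gorensteinness, $\omega_R\cong R$), there is no duality that kills $\HH^{4}_{\fm}(R)$ from the vanishing of $\HH^{i}_{\fm}(R)$ for $i<4$: quasi-Gorenstein non-Cohen--Macaulay rings with $\cmdef=1$ exist, so no condition concentrated at $\fm$ alone can finish the argument --- your own step one already shows that the information at $\fm$ (namely $\depth(R)=4$ and $\Se(3)$ there) is consistent with non-CM-ness. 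What is missing is precisely the global input: factoriality together with the C-property at all primes. As a minor point, the ``hence Gorenstein'' clause also follows more directly from the $\UFD$ property (a Cohen--Macaulay $\UFD$ that is a quotient of a regular ring has free canonical module) than from the $\mathrm{SL}$-containment.
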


\begin{proof}
We deal with the case $n=5$.
Suppose on the contradiction that $R$ is not Cohen-Macaulay. In view of \cite[Proposition 5.6.10]{ns}, $\depth(R)\geq3$. Suppose first that $\depth(R)=3$. Then $\cmdef(R)=2$.
In the light of Discussion \ref{dis}, $\dim\left(\loc(\cmdef>1)\right)\neq 0$. There is $\fp$ of height $4$ such that $\cmdef(R_{\fp})>1$. Apply $\Se(2)$ along this to see $2\leq\depth(R_{\fp})\leq4-2=2$, i.e., $\depth(R_{\fp})=2< \min\{3,\dim R_{\fp}\}$. This
contradicts $\Se(3)$. Suppose now that $\depth(R)\geq4$. Since $R$ is not Cohen-Macaulay, we deduce that $\depth(R)=4$.  Since $R$ is $\Se(3)$ we have $\depth (R_{\fp})\geq\frac{\Ht(\fp)}{2}+1$ for all $\fp\in \widehat{X}$. Since $\depth(R)=4$ we have  $\depth (R_{\fm})\geq\frac{\Ht(\fm)}{2}+1$. In sum,  $$\depth (R_{\fp})\geq\frac{\Ht(\fp)}{2}+1\geq\min\{\frac{\Ht(\fp)}{2}+1,\dim R_{\fp}\}$$ for all $\fp\in\Spec(R_{\fm})$.
Hartshorne and Ogus call this the \textit{C-property}. Now  we use \cite[Corollary 1.8]{HO}:

\begin{enumerate}
\item[Fact A)] Let $A$ be a local factorial ring  which is quotient of a regular local ring  with C-property. Then  $A$ is Cohen-Macaulay.
 \end{enumerate} Recall that invariant rings of $p$-groups are $\UFD$ (see \cite[Corollary1.7.4]{ns}).
Also, recall that a domain is $\UFD$ if and only if its height one prime ideals are principal. From this,
$R_{\fm}$ is $\UFD$. By Fact A), we get that $R_{\fm}$ is Cohen-Macaulay. Thus, $R$ is Cohen-Macaulay. This contradiction completes the proof.
\end{proof}

A ring $R$ is called
almost Cohen-Macaulay if $\grade(\fp, R)=\depth(\fp {R_\fp}, R_{\fp})$ for every $\fp\in\Spec(R)$.
We use the following
characterization of almost Cohen-–Macaulay rings: $R$ is an almost Cohen-–Macaulay ring if and only if $\Ht(\fp)\leq1+\grade(\fp, R)$ for every $\fp\in\Spec(R)$.

\begin{corollary}\label{nal} Let
$G\to \GL(n, \mathbb{F})$ be a representation of a finite group $G$ and denote the invariant ring by $R$. The following are equivalent:
 \begin{enumerate}
\item[$\mathrm{(i)}$]   $R$ is almost Cohen-Macaulay,
\item[$\mathrm{(ii)}$] $R$ is almost  Cohen-Macaulay over the punctured spectrum.
 \end{enumerate}In particular, if
$G\to \GL(4, \mathbb{F})$ then $R$
 is almost  Cohen-Macaulay and $\depth(R)\geq3$.\footnote{Let $R:=\mathbb{F}_2[x_1,\ldots,x_4]^{\mathbb{Z}/4\mathbb{Z}}$.
  It may be worth  to note that computing depth of $R$ was a challenging problem. In their paper Fossum and Griffith wrote that
 "many hours of calculations, using several hundred sheets of paper, have convinced the authors that the depth of Bertin's example is 3."
 (see page 194 of: \emph{Complete local factorial rings which are not Cohen-Macaulay in characteristic p},
  Ann. Sci. Ecole Norm. Sup. (4) \textbf{8} (1975),  189–-199). Now, we know that there are at least four different arguments for this:
 The first proof of this is in \cite[Proposition 2.3]{af}. They worked with Bertin's example, directly. The second one is in \cite[Corollaire 3.3]{elli}.
 This is more general than the first one: They work with indecomposable action of $\mathbb{Z}/p^n\mathbb{Z}$.
  The third one is  Corollary \ref{nal}
 which works for any 4-dimensional invariant rings. The fourth one is in the book \cite[Proposition 5.6.10]{ns}.
  This is very strong: for an invariant ring $A$ by a finite group we have
$\depth(A)\geq\min\{3,\dim A\}$. Concerning depth of Bertin's ring, I feel that  there
 is a typographical error in the book "polynomial invariants of finite groups" by Benson.}
\end{corollary}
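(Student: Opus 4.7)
The plan is to establish the equivalence first and then deduce the $n=4$ assertion by a height stratification that rests on normality.

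For (i) $\Rightarrow$ (ii) there is nothing to do: the almost Cohen-Macaulay property is preserved under localization, since the characterization $\Ht(\fq)\leq 1+\grade(\fq,-)$ survives passage to $R_\fp$. For (ii) $\Rightarrow$ (i), the only prime at which the almost Cohen-Macaulay inequality still needs to be checked is the irrelevant maximal ideal $\fm$, so it suffices to prove $\cmdef(R)\leq 1$. I would argue this by contradiction: assume $\cmdef(R)>1$ and invoke Discussion \ref{dis} with $m=1$, which forces $\dim\bigl(\loc(\cmdef>1)\bigr)>0$ and thus produces a non-maximal prime $\fp$ with $\cmdef(R_\fp)>1$. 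This means $\Ht(\fp R_\fp)>1+\depth(R_\fp)$, contradicting that $R_\fp$ is almost Cohen-Macaulay.

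To handle the $n=4$ case, I would use the equivalence and verify (ii) directly. Since $R$ is an invariant ring of a finite group acting on a normal domain, $R$ is normal and therefore satisfies Serre's condition $\Se(2)$. For $\fp\in\widehat{X}$ one has $\Ht(\fp)\leq 3$, and I would split into three cases: if $\Ht(\fp)\leq 1$, then $R_\fp$ is a normal local ring of dimension at most one, hence Cohen-Macaulay; if $\Ht(\fp)=2$, then $\Se(2)$ forces $\depth R_\fp=2$, again Cohen-Macaulay; if $\Ht(\fp)=3$, then $\Se(2)$ gives $\depth R_\fp\geq 2$, so $\cmdef(R_\fp)\leq 1$, and combined with the Cohen-Macaulay verdict at every smaller prime this yields that $R_\fp$ is almost Cohen-Macaulay. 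Hence (ii) holds, so by the equivalence $R$ itself is almost Cohen-Macaulay, and the depth bound $\depth(R)\geq \dim R-1=3$ drops out of $\cmdef(R)\leq 1$.

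The one subtlety I expect to be the main (modest) obstacle is confirming the almost Cohen-Macaulay condition at a height-$3$ prime $\fp$ at \emph{every} prime of $R_\fp$ rather than only at $\fp R_\fp$. This is precisely why I stratify the punctured spectrum by height starting from the top: the lower-height cases, dispatched by normality and $\Se(2)$, feed the almost Cohen-Macaulay check for the localization at a higher prime, so the induction downward closes without needing any extra machinery.
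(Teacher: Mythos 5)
Your proposal is correct and follows essentially the same route as the paper: the equivalence is obtained by reducing to the inequality $\cmdef(R)\leq 1$ at the irrelevant ideal and deriving a contradiction from Discussion \ref{dis} with $m=1$, and the $n=4$ case is settled by using normality (Serre's criterion, i.e.\ $\Se(2)$ together with the height bound $\Ht(\fp)\leq 3$ on the punctured spectrum) to verify condition (ii). Your height-by-height verification at primes of $\widehat{X}$ merely spells out what the paper compresses into the single sentence that a four-dimensional normal domain is almost Cohen-Macaulay over the punctured spectrum.
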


\begin{proof}
Suppose $R$ is almost Cohen-Macaulay. This is clear from definition that  $R$ is almost  Cohen-Macaulay over $\widehat{X}$.
Conversely, suppose  $R$ is almost   Cohen-Macaulay over $\widehat{X}$. 
  On may find easily that $\depth(R_{\fm})=\depth(R)$ and  $\dim(R_{\fm})=\dim(R)$. Suppose on the contrary that
$R$ is not almost   Cohen-Macaulay. One may read this
as $\cmdef(R)>1$.
In view of Discussion \ref{dis}, $\dim\left(\loc(\cmdef>1)\right)\neq 0$.
In particular, there is a height $n-1$ prime ideal $\fp$ such that $\dim(R_{\fp})-\depth(R_{\fp})>1$.
This contradicts almost Cohen-Macaulayness  over $\widehat{X}$.

Any four-dimensional normal domain is almost  Cohen-Macaulay over the punctured spectrum. This follows by  Serre's characterization
 of normality. Conjugate this
along with the first item to check almost  Cohen-Macaulayness of $R$. In particular, $\depth(R)\geq\dim R-1=3$.
\end{proof}

Here, $\ell(-)$ is the length function.

\begin{proposition}\label{def2}Let
$G\to \GL(n, \mathbb{F})$ be a representation of a finite  group $G$  and denote the invariant ring by $R$.
 Let $\fa\lhd_h R$ be such that $\ell(\HH^i_{\fa}(R))<\infty$  for all $i<\cd(\fa)$. Then $\HH^i_{\fa}(R)=0$ for all $i<\cd(\fa)$. In particular,
$\grade(\fa,R)=\Ht(\fa)=\cd(\fa)=\f_{\fa}(R).$
\end{proposition}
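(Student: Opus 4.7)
My plan is to prove the vanishing $\HH^i_\fa(R)=0$ for $i<c:=\cd(\fa)$ in three stages: (a) upgrade the hypothesis, via a Grothendieck composition spectral sequence, to finite length of $\HH^i_\fm(R)$ in the same range; (b) translate this via Grothendieck's finiteness theorem (Fact~\ref{gro}(ii)) into an upper bound on $\cmdef(R_\fq)$ for every non-maximal $\fq$; and (c) collide that bound with Kemper's positivity statement in Discussion~\ref{dis}. The ``in particular'' claim comes almost for free: finite length implies finitely generated so $\f_\fa(R)\geq c$, and the standing chain $\f_\fa(R)\leq\Ht(\fa)\leq c$ then forces $\f_\fa(R)=\Ht(\fa)=c$, while $\grade(\fa,R)=c$ is a restatement of the desired vanishing.

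For step (a), the identity $\Gamma_\fm\circ\Gamma_\fa=\Gamma_\fm$ (valid because $\fa\subseteq\fm$) supplies the spectral sequence
\[
E_2^{p,q} \;=\; \HH^p_\fm\bigl(\HH^q_\fa(R)\bigr) \;\Longrightarrow\; \HH^{p+q}_\fm(R).
\]
For $q<c$ the module $\HH^q_\fa(R)$ is $\fm$-torsion by hypothesis, so $E_2^{p,q}$ is supported only at $p=0$; for $q>c$ it vanishes by the definition of $c$. An inspection of the $d_r$-maps along the anti-diagonal $p+q=i$ with $i<c$ shows every possible source and target lies in the vanishing region, so the sequence collapses there and yields
\[
\HH^i_\fm(R) \;\cong\; \HH^i_\fa(R) \qquad\text{for all } i<c.
\]
Hence the finite length hypothesis transfers to $\HH^i_\fm(R)$ in that range.

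For step (b), Fact~\ref{gro}(ii) applied to $(\fm,R)$ gives $\f_\fm(R)=\inf\{\depth(R_\fq)+\Ht((\fm+\fq)/\fq):\fq\neq\fm\}$. Since $R$ is a catenary equidimensional graded domain one has $\Ht((\fm+\fq)/\fq)=\dim(R/\fq)=\dim R-\Ht(\fq)$, so the formula rearranges to $\f_\fm(R)=\dim R-\sup\{\cmdef(R_\fq):\fq\in\Spec(R)\setminus\{\fm\}\}$. Feeding in $\f_\fm(R)\geq c$ from step (a) delivers $\cmdef(R_\fq)\leq\dim R-c$ for every non-maximal $\fq$.

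For step (c), suppose for contradiction $\HH^{i_0}_\fa(R)\neq 0$ for some $i_0<c$. The isomorphism of step (a) forces $\HH^{i_0}_\fm(R)\neq 0$, so $\depth(R)\leq i_0<c$ and $\cmdef(R)>\dim R-c$. Apply Discussion~\ref{dis} with $m:=\cmdef(R)-1$ (the hypothesis $\cmdef(R)>m$ is automatic): the positivity $\dim\loc(\cmdef>m)>0$ produces a \emph{non-maximal} prime $\fp$ with $\cmdef(R_\fp)\geq\cmdef(R)>\dim R-c$, in direct conflict with step (b). I expect the most delicate point to be verifying that the spectral sequence really collapses to an honest isomorphism in the range $i<c$ (and not merely leaves $\HH^i_\fa(R)$ as a subquotient of $\HH^i_\fm(R)$); once that hinge is secure, the interplay between Kemper's inequality and Grothendieck's formula is pure bookkeeping.
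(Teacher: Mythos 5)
Your proof is correct, and it takes a genuinely different route from the paper's. The paper first proves $\Ht(\fa)=\cd(\fa)$ by localizing at a minimal prime, then runs a case analysis on where $\Ht(\fa)$ sits relative to $\depth(R)$: the case $\Ht(\fa)=n$ is referred to Proposition \ref{kem}, the low-grade cases are killed by a separate ``Fact A'' (that $\HH^{\grade(\fa,M)}_{\fa}(M)$ is non-artinian when $\grade(\fa,M)<\depth M$, proved by induction via graded injective envelopes), and only the remaining band of heights is handled by Grothendieck's finiteness theorem together with Kemper's bound. You instead collapse the composition spectral sequence $\HH^p_{\fm}(\HH^q_{\fa}(R))\Rightarrow\HH^{p+q}_{\fm}(R)$ to get $\HH^i_{\fa}(R)\cong\HH^i_{\fm}(R)$ for all $i<\cd(\fa)$, which reduces the whole statement to the maximal-ideal situation in one stroke; the delicate point you flag does go through, since on the diagonal $p+q=i<\cd(\fa)$ every term with $p>0$ has $q<\cd(\fa)$ and hence vanishes (a graded finite-length module is $\fm$-power-torsion because $\fm$ is the only graded maximal ideal), and the differentials into and out of $E_r^{0,i}$ land in that vanishing region. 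After that, your steps (b) and (c) use exactly the same two external inputs as the paper's hardest case --- Fact \ref{gro}(ii) applied to $\fm$ (where $\Ht((\fm+\fq)/\fq)=\dim(R/\fq)$ is automatic, so you get the clean identity $\f_{\fm}(R)=\dim R-\sup_{\fq\neq\fm}\cmdef(R_{\fq})$) and the positivity of $\dim\loc(\cmdef>m)$ from Discussion \ref{dis} --- but applied uniformly rather than case by case. What your approach buys is brevity, the avoidance of Fact A and of the separate treatment of $\Ht(\fa)=n$, and the stronger byproduct $\HH^i_{\fa}(R)\cong\HH^i_{\fm}(R)$ below $\cd(\fa)$; what the paper's buys is the reusable Fact A (it is invoked again in Corollary \ref{ll4}) and an explicit early identification of $\Ht(\fa)=\cd(\fa)$, which your argument only recovers at the end from the chain $\f_{\fa}(R)\leq\Ht(\fa)\leq\cd(\fa)$.
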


\begin{proof}
We have $\dim R=n$. Note that $\grade(\fa,R)\leq\Ht(\fa)\leq\cd(\fa,R)\leq \dim R$. We claim that $\Ht(\fa)=\cd(\fa)$: Indeed, let $\fp\in\min(\fa)$ of same height as of $\fa$.
By Fact \ref{gro} $\HH^{\dim(R_{\fp})}_{\fp R_{\fp}}(R_{\fp})$
 is not finitely generated. Since
 $\HH^{\Ht(\fp)}_{\fp R_{\fp}}(R_{\fp})\simeq\HH^{\Ht(\fa)}_{\fa R_{\fp}}(R_{\fp})\simeq(\HH^{\Ht(\fa)}_{\fa}(R))_{\fp}$ we  get that  $\HH^{\Ht(\fa)}_{\fa}(R)$
 is not finitely generated.  We look at our assumption to observe that $\Ht(\fa)=\cd(\fa)$,
  as claimed. In sum,  $\grade(\fa,R)\leq\Ht(\fa)=\cd(\fa,R)$.
In particular, if $\grade(\fa,R)= \Ht(\fa),$ then  by definition of grade,
$\HH^i_{\fa}(R)=0$ for all $i<\cd(\fa)$. This yields the claim in the Cohen-Macaulay case. Let $m\geq 0$ be such that
$\dim(R)-\depth R=m+1$. We bring the following fact:

Fact A):  Let $M$ be any graded $R$-module (not necessarily finitely generated)
such that $r:=\grade(\fa,M)<\depth M$. Then $\HH^{r}
_{\fa}(M)$ is not artinian. This stated in \cite{asg} without any proof. We use induction on $r$ to prove it. Let $r=0$. If $\HH^{0}
_{\fa}(M)$ were be artinian, then we should have $0\neq\HH^{0} _{\fa}(M)\cong
\HH^{0}_{\fm}(\HH^{0} _{\fa}(M) )\cong \HH^{0}_{\fm}(M).
$ In particular,  $\HH^{0}_{\fm}(M)\neq 0$. That is $\depth(M)=0$. This
contradicts $r<\depth M$. Now, assume that $r>0$.
Recall that $\grade(\fa,M):=\inf\{i:\HH^{i}_{\fa}(M)\neq 0\}$.
In particular, $\HH^0_{\fa}(M)=0$. Let $E$ be the graded injective
envelope of $M$ and $N:=E/M$ (such a thing exists. For more details   see \cite[\S 12]{BS}). Recall that $M\subset E$ is graded-essential if $M\cap F	\neq0$ for every non-zero graded submodule $F\subset E$.
Let $e\in \HH^0_{\fa}(E)$ by any graded element.  Suppose $e$ is not zero. Since the extension $M\subset E$ is essential, there is a homogeneous element
$r\in R$ such that $0\neq re\in M$. Clearly,  $re\in \HH^0_{\fa}(M)=0$. This contradiction says that $\HH^0_{\fa}(E)=0$. Similarly,
$\HH^0_{\fm}(E)=0$. From
$0\to M\to E\to N\to0,$ we obtain that $\HH^{i} _{\fa}(N)\cong \HH^{i+1}
_{\fa}(M)$ and $\HH^{i} _{\fm}(N)\cong \HH^{i+1} _{\fm}(M)$ for all
$i\geq0$. Hence $\grade(\fa,N)=\grade(\fa,M)-1$ and
$\depth N =\depth M-1$. The claim follows by the
induction hypothesis.

Suppose first that
$\Ht(\fa)=n$.
 Since local cohomology modules are invariant under taking radical we may assume that $\fa$ is radical.
Hence $\fa=\fm$. Therefore,  the desired claim is in Proposition \ref{kem}.
If $\grade(\fa)<\depth R=n-m-1$, then the claim follows by Fact A). In particular,  the claim is true whenever $\Ht(\fa)<n-m-1$.
Suppose now that $\Ht(\fa)=n-m-1$. In view of Fact A we may and do assume that $\grade(\fa,R)=\Ht(\fa)$.
In particular,
 $\grade(\fa,R)=\Ht(\fa)=\cd(\fa)$. Therefore, the claim follows by definition of grade.
 Finally,
suppose  that $ n-m\leq \Ht(\fa)\leq n-1$.  Since  $\ell(\HH^i_{\fa}(R))<\infty$  for all $i<\cd(\fa)$ it follows from definition that $\f_{\fa}(R)=\Ht(\fa)$. We are going to use Grothendieck's finiteness theorem:
 $$\f_{\fa}(R)=\inf\{\depth(R_{\fq})+\Ht(\frac{\fa+\fq}{\fq}):\fq\in\Spec(R)\setminus\V(\fa)\}\quad(\ast)$$

Claim B): Let $\fq$ be any prime ideal of height  $n-1$. Then $\cmdef(R_{\fq})\leq m-1$.

Indeed,
suppose first that $\fq\in\Spec(R)\setminus\V(\fa)$. Since $\fq$ is 1-dimensional, we have $\Ht(\frac{\fa+\fq}{\fq})=\dim(\frac{R}{\fq})=1$.    In the light of $(\ast)$ we observe that
$$1+\depth(R_{\fq})=\depth(R_{\fq})+\Ht(\frac{\fa+\fq}{\fq})\geq\f_{\fa}(R)=\Ht(\fa)\geq n-m.$$
Conclude by this that $\depth(R_{\fq})\geq n-m+1$.  Hence
$$\dim(R_{\fq})-\depth(R_{\fq})\leq(n-1)- n+m-1=m-2.$$
 Second, suppose that $\fq\in\V(\fa)$.  In view
of our assumption we observe for all $i<\cd(\fa)=\Ht(\fa)$ that $\HH^{i}_{\fa R_{\fq}}(R_{\fq})\simeq \HH^{i}_{\fa}(R)_{\fq}=0$.
 This implies that $\Ht(\fa)\leq\grade(\fa_{\fq},R_{\fq})$.   We have $$n-m\leq\Ht(\fa)\leq\grade(\fa_{\fq},R_{\fq})\leq\depth(R_{\fq}),$$ because grade becomes larger with respect to inclusion. This yields that
$$\cmdef(R_{\fq})=\dim(R_{\fq})-\depth(R_{\fq})\leq(n-1)- n+m=m-1.$$
In both cases we showed $\cmdef(R_{\fq})\leq m-1$.  This completes the proof of Claim    B).

 Recall that $\cmdef(R)=m+1>m$. By Discussion \ref{dis} we have $0<\dim\left(\loc(\cmdef>m)\right)$.
In particular, there is $\fq\in\Spec(R)$ of height $n-1$ such that  $\cmdef(R_{\fq})> m$. This is a contradiction with Claim B).
This contradiction shows that any ideal $\fa$ of height in the range $ n-m\leq \Ht(\fa)\leq n-1$ disregards
the hypothesis of the  lemma.

To see the particular case we remarked that $\grade(\fa,R)\leq\f_{\fa}(R)\leq\Ht(\fa)\leq\cd(\fa).$ By the first part, $\grade(\fa,R)=\cd(\fa)$. From these $\grade(\fa,R)=\Ht(\fa)=\cd(\fa)=\f_{\fa}(R).$
This is what we want to prove.
\end{proof}

One can not replace the finite length assumption with artinian in the above lemma:
It is enough to look at a non Cohen-Macaulay invariant ring $R$ and recall that $\HH^i_{\fm}(R)$ is artinian for any $i$.

\begin{lemma}\label{l}
Let $M$ be an artinian module and let $x\in \fm$. Then
$M_x=0$.
\end{lemma}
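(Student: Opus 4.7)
The plan is to show that every element of $M$ is killed by some power of $x$, which is equivalent to $M_x=0$. Pick an arbitrary $m\in M$ and consider the cyclic submodule $Rm\subseteq M$. Since $M$ is artinian, so is $Rm$; being also finitely generated (it is cyclic), $Rm$ is both noetherian and artinian, hence of finite length.

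In the standing (graded) local setting, a module of finite length is annihilated by some power of the (graded) maximal ideal $\fm$: indeed, its composition factors are copies of $R/\fm$, and a module with such a composition series of length $k$ satisfies $\fm^{k}\cdot Rm=0$. Since $x\in\fm$ we get $x^{k}\in\fm^{k}$, so $x^{k}m=0$, giving $m/1=0$ in $M_{x}$. As $m$ was arbitrary, $M_{x}=0$.

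If one prefers to avoid appealing to the finite-length/annihilator fact, the same conclusion can be reached directly from DCC: the descending chain $Rm\supseteq xRm\supseteq x^{2}Rm\supseteq\cdots$ of submodules of $M$ must stabilize, so $x^{n}Rm=x^{n+1}Rm$ for some $n$. Then $x^{n}m=x^{n+1}rm$ for some $r\in R$, i.e., $(1-xr)x^{n}m=0$; because $x\in\fm$, the element $1-xr$ is a unit in the local ring $R$, so $x^{n}m=0$ and $M_x=0$ follows.

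The argument is entirely elementary; I do not anticipate any genuine obstacle. The only point to be mindful of is that the result relies on $R$ being (graded) local so that either (i) a finite length module is $\fm$-primarily annihilated, or equivalently (ii) the element $1-xr$ is a unit for $x\in\fm$; this hypothesis is built into the paper's standing conventions on the term ``local''.
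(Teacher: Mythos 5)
Your proof is correct and follows essentially the same route as the paper: reduce to finitely generated (hence finite-length) submodules of the artinian module $M$ and observe that a power of $x$ annihilates each one, so every element of $M$ is $x$-power torsion and $M_x=0$. Your elementwise use of cyclic submodules (and the alternative DCC/Nakayama-style argument with $1-xr$) is just an unrolling of the paper's direct-limit formulation $M_x\simeq\varinjlim_i (M_i)_x$, so there is nothing substantively different to compare.
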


\begin{proof}
Suppose first that $M$ is of finite length
an let $n=\ell(M)+1$. Then $x^nM=0$. From this $\frac{m}{1}=0$ for all $m\in M$. So,
$M_x=0$. In general, write $M\simeq{\varinjlim}_i M_i$ where $M_i\subset M$ is finitely generated. Then $\ell(M_i)<\infty$.
Recall that  $M_x\simeq{\varinjlim}_i(M_i)_x$. Since $(M_i)_x=0$, we get that $M_x=0$.
\end{proof}

A Krull domain with torsion classical group is called \textit{almost factorial}.

\begin{fact}\label{all4}(Samuel)
Let
$G\to \GL(n, \mathbb{F})$ be a representation of a finite group $G$. Denote the invariant ring by $R$.
Then $R$ is almost factorial.
\end{fact}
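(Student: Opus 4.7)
The plan is to reduce this to Samuel's general theory of class groups under finite group actions. The ambient ring $A:=\mathbb{F}[V]=\mathbb{F}[x_{1},\ldots,x_{n}]$ is a polynomial ring, hence a UFD, and in particular a Krull domain. The action $G\to\GL(n,\mathbb{F})$ extends to $\mathbb{F}$-algebra automorphisms of $A$, so $R=A^{G}$ is the fixed ring of a finite group acting on a Krull domain. First, I would invoke the classical theorem of Samuel that the invariant ring of a finite group acting on a Krull domain is again a Krull domain; here one uses that the height-one primes of $R$ are precisely the contractions of height-one primes of $A$ (which is integral and module-finite over $R$), and the valuations on $A$ can be averaged over $G$ to produce the discrete valuations witnessing that $R$ is Krull.

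Next, to control $\cl(R)$, I would apply Samuel's exact sequence
$$0\longrightarrow H^{1}(G, A^{*})\longrightarrow \cl(R)\longrightarrow \cl(A)^{G}.$$
Since $A$ is a polynomial ring over a field, $\cl(A)=0$, so $\cl(R)$ embeds into $H^{1}(G,A^{*})$. The units of $A$ are exactly $\mathbb{F}^{*}$, and since the action of $G$ is $\mathbb{F}$-linear it fixes $\mathbb{F}^{*}$ pointwise. Hence $H^{1}(G,A^{*})=\Hom(G,\mathbb{F}^{*})$, the character group of $G$.

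Finally, $\Hom(G,\mathbb{F}^{*})$ is visibly a finite abelian group because $G$ is finite (alternatively, the standard corestriction-restriction argument shows that $H^{i}(G,-)$ is annihilated by $|G|$ for all $i>0$). Either way $\cl(R)$ is torsion (in fact finite, annihilated by $|G|$), so $R$ is almost factorial by definition.

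I do not anticipate a substantial obstacle: the argument is essentially a direct appeal to Samuel's theorem and to his exact sequence. The only points that are not pure formalism are the identification $A^{*}=\mathbb{F}^{*}$ (standard for polynomial algebras over fields) and the verification that the $G$-action on units is trivial, which is automatic from the linearity of the representation. Everything else is a citation.
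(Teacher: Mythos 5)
Your proposal is correct and follows essentially the same route as the paper: both arguments rest on the embedding $\cl(R)\hookrightarrow\Hom(G,\mathbb{F}^{\ast})$ coming from Samuel's theory (you derive it from the exact sequence $0\to H^{1}(G,A^{\ast})\to\cl(R)\to\cl(A)^{G}$ with $\cl(A)=0$ and $A^{\ast}=\mathbb{F}^{\ast}$, while the paper simply recalls it), and both conclude by noting that every $f\in\Hom(G,\mathbb{F}^{\ast})$ satisfies $f^{|G|}=1$, so the class group is torsion and the Krull domain $R$ is almost factorial. The only cosmetic difference is that the paper gets the Krull property from ``noetherian normal domain implies Krull'' rather than from Samuel's theorem on fixed rings of Krull domains; both are valid.
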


\begin{proof}
Recall that classical group of $R$ is a subgroup of $\Hom(G,\mathbb{F}^\ast)$.
Let  $f\in\Hom(G,\mathbb{F}^\ast)$ and $g\in G$. By definition, $f^{|G|}(g)=f(g^{|G|})=f(1_G)=1_\mathbb{F}$.  Also,
 a noetherian normal domain is a Krull domain. So, $R$ is almost factorial.
\end{proof}

\begin{corollary}\label{ll4}
Let
$G\to \GL(4, \mathbb{F})$ be a representation of a finite group $G$. Denote the invariant ring by $R$.
 Let $\fa\lhd_h R$ be such that $\ell(\HH^i_{\fa}(R))<\infty$ for  some $i$. Then $\HH^i_{\fa}(R)=0$.
\end{corollary}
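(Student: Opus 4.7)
The plan is a case analysis on $i$ and on $\Ht(\fa)$, using that in dimension $4$ the ring $R$ is almost Cohen--Macaulay with $\depth R \geq 3$ (Corollary~\ref{nal}) and almost factorial (Fact~\ref{all4}), so in particular $\cd(\fa) = 1$ whenever $\Ht(\fa) = 1$. The extremes are immediate: $\HH^0_\fa(R) = 0$ because $R$ is a domain; and for $i = 4$, Fact~\ref{gro}(i) forbids finite length when $\fa$ is $\fm$-primary (since $\HH^4_\fm(R)$ is not finitely generated), while Hartshorne--Lichtenbaum (Fact~\ref{gro}(iii)) applied via the analytic irreducibility of $R_\fm$ forces $\HH^4_\fa(R) = 0$ otherwise.

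For $1 \leq i \leq 3$, the main tool is the Grothendieck spectral sequence $E_2^{p,q} = \HH^p_\fm(\HH^q_\fa(R)) \Rightarrow \HH^{p+q}_\fm(R)$. Finite length of $\HH^i_\fa(R)$ makes it $\fm$-torsion, so $E_2^{p,i} = 0$ for $p > 0$ and $E_2^{0,i} = \HH^i_\fa(R)$; together with $\HH^j_\fm(R) = 0$ for $j \leq 2$ (from $\depth R \geq 3$), one reads $E_\infty^{0,i}$ as a subquotient of $\HH^i_\fm(R)$. The case $i = 1$ is then immediate: the only outgoing differential lands in $\HH^2_\fm(\HH^0_\fa(R)) = 0$, so $\HH^1_\fa(R) = E_\infty^{0,1} \subseteq \HH^1_\fm(R) = 0$. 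For $i = 2$, the differentials force an injection $\HH^2_\fa(R) \hookrightarrow \HH^2_\fm(\HH^1_\fa(R))$; a case split on $\Ht(\fa) \in \{1,2,3,4\}$ then annihilates the right-hand side via, respectively, almost-factoriality, a localization-plus-Fact~\ref{gro}(i) contradiction, a direct-limit Grothendieck-vanishing argument on $\HH^1_\fa(R) = \varinjlim \Ext^1(R/\fa^n,R)$, and the depth vanishing.

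For $i = 3$, the cases $\Ht(\fa) \in \{1,3,4\}$ resolve quickly: $\Ht(\fa) = 1$ gives $\cd(\fa) = 1 < 3$; $\Ht(\fa) = 3$ produces a contradiction via Fact~\ref{gro}(i) at a height-$3$ minimal prime of $\fa$; $\Ht(\fa) = 4$ collapses to $\HH^3_\fa(R) = \HH^3_\fm(R)$, where finite length together with $\depth R \geq 3$ makes $R$ generalized Cohen--Macaulay, so Proposition~\ref{kem} forces $R$ Cohen--Macaulay and hence $\HH^3_\fm(R) = 0$. The main obstacle is the remaining subcase $\Ht(\fa) = 2$ with $\cd(\fa) = 3$: Hartshorne--Lichtenbaum already forces $\HH^3_\fa(R)_\fp = 0$ at every height-$3$ prime $\fp \supseteq \fa$, so localization alone yields no contradiction. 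To close it, I would extract from the spectral sequence the short exact sequence $0 \to \HH^3_\fa(R) \to \HH^2_\fm(\HH^2_\fa(R)) \to E_\infty^{2,2} \to 0$ (with $E_\infty^{2,2}$ a subquotient of $\HH^4_\fm(R)$), and combine with Grothendieck's finiteness theorem (Fact~\ref{gro}(ii))---which forces $\f_{\fa}(R) \leq \Ht(\fa) = 2$ and hence that $\HH^2_\fa(R)$ is not finitely generated---together with Lemma~\ref{l} applied to $\HH^3_\fa(R)$, to reach a contradiction. This last step is the technically most demanding part of the proof.
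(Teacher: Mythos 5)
Your route is genuinely different from the paper's: you run everything through the Grothendieck spectral sequence $E_2^{p,q}=\HH^p_{\fm}(\HH^q_{\fa}(R))\Rightarrow \HH^{p+q}_{\fm}(R)$, whereas the paper combines three more elementary tools (the non-artinianness of $\HH^{\grade(\fa,R)}_{\fa}(R)$ when $\grade(\fa,R)<\depth R$, i.e.\ Fact A of Proposition \ref{def2}; the long exact sequence relating $\HH^{\bullet}_{\fm}(R)$, $\HH^{\bullet}_{\fa}(R)$ and $\HH^{\bullet}_{\fa R_x}(R_x)$ together with Lemma \ref{l}; and localization at a minimal prime of $\fa$). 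Most of your plan works: the extremes $i=0,4$; the case $i=1$; the case $i=2$, where the injection $\HH^2_{\fa}(R)\hookrightarrow \HH^2_{\fm}(\HH^1_{\fa}(R))$ is legitimate (since $E_2^{3,0}=\HH^3_{\fm}(\HH^0_{\fa}(R))=0$ and $E_\infty^{0,2}$ is a subquotient of $\HH^2_{\fm}(R)=0$) and your four subcases on $\Ht(\fa)$ do annihilate the target (for $\Ht(\fa)=3$ one can even say it in one line: $\Supp\HH^1_{\fa}(R)\subseteq\V(\fa)$ has dimension $\le 1$, so $\HH^2_{\fm}(\HH^1_{\fa}(R))=0$ by Grothendieck vanishing); and $i=3$ with $\Ht(\fa)\in\{1,3,4\}$.

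The gap is exactly where you flag it: $i=3$, $\Ht(\fa)=2$, $\cd(\fa)=3$. The short exact sequence $0\to\HH^3_{\fa}(R)\to\HH^2_{\fm}(\HH^2_{\fa}(R))\to E_\infty^{2,2}\to0$ is not available: injectivity of $d_2^{0,3}$ would require $E_\infty^{0,3}=0$, but $E_\infty^{0,3}$ is the edge subquotient of $\HH^3_{\fm}(R)$, which is nonzero in general (Bertin's ring has $\HH^3_{\fm}(R)\neq0$), and the spectral sequence gives no control over it; the other outgoing differential lands in $\HH^3_{\fm}(\HH^1_{\fa}(R))$, which vanishes for dimension reasons and so helps not at all. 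Grothendieck finiteness plus Lemma \ref{l} do not obviously close the case either, since, as you observe, Hartshorne--Lichtenbaum already kills every localization of $\HH^3_{\fa}(R)$ at a non-maximal prime. The missing ingredient --- and it is precisely what the paper invokes when it writes that ``$\HH^j_{\fa}(R)$ is not finitely generated for $j=\cd(\fa,R)$'' --- is the standard fact that the top nonvanishing local cohomology module is never finitely generated once $\cd(\fa,R)\geq1$. Concretely: Hartshorne--Lichtenbaum gives $\cd(\fa)<4$, so $\HH^3_{\fa}(-)$ is right exact and commutes with direct limits, whence $\HH^3_{\fa}(R)\otimes_R R/\fa\cong\HH^3_{\fa}(R/\fa)=0$, i.e.\ $\HH^3_{\fa}(R)=\fa\HH^3_{\fa}(R)$; a nonzero module of finite length satisfying this is killed by Nakayama. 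With that one line the remaining subcase closes and your proof is complete.
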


\begin{proof}
Recall from Corollary \ref{nal} that $4=\dim R \geq\depth R\geq 3$.
First we deal with the case
$\Ht(\fa)=4$.
 Since local cohomology modules are invariant under taking radical we may assume $\fa$ is radical.
Hence $\fa=\fm$. Recall that  $\HH^{4}_{\fm}(R)$ is not finitely generated. Then,
the only crucial $\HH^j_{\fm}(R)$ is $\HH^{3}_{\fm}(R)$. Suppose it is of finite length. From this
we observe that $\HH^{j}_{\fm}(R)$ is of finite length for all $j<\dim R$.
In view of Proposition \ref{kem} $\HH^{j}_{\fm}(R)=0$  for all $j<\dim R$, as claimed.

Without loss of the generality we assume that $\Ht(\fa)<4$.
Recall that $R$  is a normal domain. Note that $\fm\in\Supp(\HH_{\fa}^{\cd(\fa,R)}(R))$.
Also, if a local ring of an algebraic variety is   normal, then it is analytically irreducible. By
 Hartshorne-Lichtenbaum vanishing theorem we deduce that $\cd(\fa,R)<4$.

In this paragraph we deal with the case
$\Ht(\fa)=3$. Recall that  $3=\Ht(\fa)\leq\cd(\fa,R)< 4$, i.e.,
$\Ht(\fa)=\cd(\fa,R)$.
If $\grade(\fa,R)=3$ then $i=3$. This is not the case, because $\HH^j_{\fa}(R)$ is not finitely generated for $j=\cd(\fa,R)$.
If $\grade(\fa,R)=2$ then $i=2$. This case excluded by Fact A) in Proposition \ref{def2}.
Then we may assume that $\grade(\fa,R)=1$. Let us again revisit Fact A) in Proposition \ref{def2}.  This allow us to assume that
$i=2$. We pick   $x$  such that $\fm=\rad(\fa+xR)$.
There is the following long exact
sequence of local cohomology modules (see \cite[Proposition 8.1.2]{BS})
$$\cdots \lo \HH_{\fa R_x}^{j-1}
(R_x) \lo \HH_{\fm}^j(R)\lo \HH_{\fa}^j(R)\lo \HH_{\fa R_x}^{j}
(R_x)\lo \cdots .$$Recall that $\HH_{\fa R_x}^{i}
(R_x)\simeq \HH_{\fa R}^{i}
(R)_x$.
In view of Lemma \ref{l} $\HH_{\fa R}^{2}
(R)_x=0$.
Hence $\HH_{\fm}^2(R)\to \HH_{\fa}^2(R)\to 0$.
Recall that $\depth(R)\geq3$. By definition of depth,
$\HH_{\fm}^2(R)=0$. In view of $\HH_{\fm}^2(R)\to \HH_{\fa}^2(R)\to 0$ we get that
$\HH_{\fa}^2(R)=0$ as desired.

Next, we assume that
$\Ht(\fa)=2$. Recall that  $2=\Ht(\fa)\leq\cd(\fa,R)<4$.
Note that $\HH^j_{\fa}(R)$ is not finitely generated for $j=\cd(\fa,R)$ and for $j=\Ht(\fa)$.
From this we conclude that $i=1$. Recall that $\grade(\fa,R)\leq\Ht(\fa)$. If
$\grade(\fa,R)=2$ the claim holds by definition of grade. Then we may assume that $\grade(\fa,R)=1=i$.
In particular, $\grade(\fa,R)=1<\depth R$. This case excluded by Fact A) in Proposition \ref{def2}.

Finally, we assume that
$\Ht(\fa)=1$.  Recall from Fact \ref{all4} that $R$ is  almost factorial. This is well-known by a result of Stroch that any
height one radical ideal over almost factorial is principal up to radical (see e.g \cite[Proposition 6.8]{fg}). Thus, $\HH_{\fa}^{>1}(R)=0$.
It remains  to note that $\HH_{\fa}^{1}(R)$ is not finitely generated.
\end{proof}

\begin{example}\label{}
Let $R:=\mathbb{F}_2[x_1,\ldots,x_4]^{\mathbb{Z}/4\mathbb{Z}}$ via the assignments $x_i\mapsto x_{i+1}$ for $i<4$ and $x_4\mapsto x_1$.
 Let $\fa:=(\sum x_i,x_1x_3+x_2x_4,\sum _{i<j<k} x_ix_jx_k)$. Then $\ell(\HH^2_{\fa}(R))=\infty$. Also, $\ell(\HH^3_{\fm}(R))=\infty$.
\end{example}

\begin{proof}
Bertin proved that $R$ is not Cohen-Macaulay by showing that the parameter sequence
$\sum x_i,x_1x_3+x_2x_4,\sum _{i<j<k} x_ix_jx_k$ is not regular sequence. From this we see that
$\grade(\fa,R)=2<3=\Ht(\fa)$. In view of Corollary \ref{ll4} $\ell(\HH^2_{\fa}(R))=\infty$.
 Recall from Corollary \ref{nal} that $\depth(R)=3$. Again, Corollary \ref{ll4} implies that $\ell(\HH^3_{\fm}(R))=\infty$.
 \end{proof}

It may be interesting to give an explicit  chain of submodules of $\HH^3_{\fm}(R)$ (resp. $\HH^2_{\fa}(R)$) with simple factors.
 We remark that "$\depth(-)=r$" is not enough to deduce $\Se(r)$. We use  the Bertin's example: $\depth(R)=3$ but $R$ is not $\Se(3)$.

\section{the local results }
The reader  may have to skip the next two items.

\begin{remark}
The main difference between local case and global case of invariant rings by a finite group
is the the following:

\begin{enumerate}
\item[i)] Nagata constructed a zero-dimensional noetherian $k$-algebra $R$ and a
finite group $G$ of automorphisms of $R$ such that $R^G$ is
non-noetherian (see \cite[Introduction]{F1}).

\item[ii)] Here is a useful  criterion: If the $R$-module of K\"{a}hler differentials for $R/pR$ over $k$ is finitely
generated for all primes $p$ dividing the order of $G$, then $R^G$ is noetherian (see \cite[main result]{F1}).\end{enumerate}
\end{remark}

\begin{discussion}Here is a quick review of the notion of non-noetherian grade.
\begin{enumerate}
\item[i)]  The classical grade  of $\fa$ on a module $M$,
denoted by $\cgrade_R(\fa,M)$, is  the supremum 
lengths of all weak regular sequences on $M$ contained in $\fa$.
In the case that $\fa$ is finitely generated by generating set
$\underline{x}:=x_{1}, \cdots, x_{r}$, the $\check{C}ech$ grade of
$\fa$ on $M$ is defined by $$\Cgrade_{R}(\fa,M):=\inf\{i\in
\mathbb{N}\cup\{0\}|\HH_{\underline{x}}^{i}(M)\neq0\}.$$ For not necessarily finitely generated
ideal $\fa$ the $\check{C}ech$ grade of $\fa$ on $M$ is defined
$$\Cgrade_{R}(\fa,M):=\sup\{\Cgrade_R(\fb,M):\fb\in\Sigma\},$$where $\Sigma$ is the family of all finitely generated
subideals $\fb$ of $\fa$. 
Recall that
 $\Cgrade_{R}(\fa,M)\geq\grade_{R}(\fa,M)$. The notation $\Cdepth(R)$
stands for $\Cgrade_{R}(\fm,R)$ when $(R,\fm)$ is a quasilocal ring.
 \item[ii)]   The  Cohen-Macaulay property of non-noetherian invariant rings investigated in \cite{at}
and \cite{adt}.
\end{enumerate}
 \end{discussion}

The following  unifies (and extends) \cite[Proposition 2]{fog} and \cite[Proposition 2]{stong}.

\begin{fact}\label{corob}
Let $(R,\fm)$ be a  local ring (resp. integral domain) and let  $G$ be a  finite group  acting on $R$. Let
$a$ and $b$ be in $R^G$ be such that they $R$-sequence. Then
$a$ and $b$ is  an $R^G$-sequence. In particular, if $\depth(R)\geq2$ then $\Cdepth(R^G)\geq2$.
\end{fact}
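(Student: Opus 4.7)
The plan is to first establish a key averaging identity, use it to deduce that $a,b$ form an $R^G$-sequence, and then produce an $R^G$-sequence of length two inside $\fn := \fm \cap R^G$ for the particular assertion.

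\textbf{Step 1: An averaging lemma.} I would first prove: if $a\in R^G$ is a non-zero-divisor on $R$, then $aR\cap R^G=aR^G$. The inclusion $aR^G\subseteq aR\cap R^G$ is obvious. Conversely, suppose $x\in R^G$ and $x=ay$ for some $y\in R$. Applying any $g\in G$ gives $x=g(x)=a\cdot g(y)$, hence $a(g(y)-y)=0$; since $a$ is $R$-regular this forces $g(y)=y$ for all $g$, so $y\in R^G$ and $x\in aR^G$.

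\textbf{Step 2: The sequence $a,b$ is $R^G$-regular.} Since $R^G\subseteq R$ and $a$ is $R$-regular, $a$ is a non-zero-divisor on $R^G$. Next, suppose $x\in R^G$ with $bx\in aR^G\subseteq aR$. Because $b$ is a non-zero-divisor on $R/aR$, we get $x\in aR$, and by Step 1 this upgrades to $x\in aR^G$. Hence $b$ is a non-zero-divisor on $R^G/aR^G$. (Properness $R^G/aR^G\neq 0$ follows in the local case from $a\in\fn$ together with Fact \ref{max}.)

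\textbf{Step 3: Constructing an $R$-sequence inside $\fn$.} Assume $\depth(R)\geq 2$. Choose an $R$-regular $c_0\in\fm$ and set $a:=\prod_{g\in G}g(c_0)\in\fn$. Each factor $g(c_0)$ is $R$-regular (as $g$ is an automorphism), so $a$ is $R$-regular. Since $\depth(R/aR)\geq 1$, the maximal ideal $\fm$ is not among $\Ass_R(R/aR)$; by prime avoidance pick $b_0\in\fm$ avoiding every $\fp\in\Ass_R(R/aR)$. Because $a\in R^G$, the group $G$ permutes $\Ass_R(R/aR)$: if $\fp$ is such a prime then so is $g^{-1}(\fp)$, hence $b_0\notin g^{-1}(\fp)$, i.e.\ $g(b_0)\notin\fp$. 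Therefore the product $b:=\prod_{g\in G}g(b_0)\in\fn$ avoids each $\fp\in\Ass_R(R/aR)$ and so is a non-zero-divisor on $R/aR$. Thus $a,b$ is an $R$-sequence contained in $\fn\cap R^G$, and by Step 2 it is an $R^G$-sequence.

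\textbf{Step 4: Čech depth bound.} The finitely generated subideal $\fb:=(a,b)\subseteq\fn$ satisfies $\HH^i_{\fb}(R^G)=0$ for $i<2$, since $a,b$ is an $R^G$-regular sequence and Čech cohomology on a regular sequence vanishes below the length (e.g.\ via the Koszul-to-Čech limit). Hence $\Cgrade(\fb,R^G)\geq 2$, and passing to the supremum over finitely generated subideals of $\fn$ gives $\Cdepth(R^G)\geq 2$.

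The main obstacle is Step 1, where the only available input is the $R$-regularity of $a$ together with $G$-invariance of $x$; the averaging trick is what allows the invariance to propagate from $x$ to its preimage $y$. Step 3 is the other delicate point: one must exploit that norms of elements land in $R^G$ while still controlling associated primes, and this only works because $\Ass_R(R/aR)$ is $G$-stable once $a\in R^G$.
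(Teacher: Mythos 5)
Your proof is correct, and it follows the same overall strategy as the paper: verify directly that $b$ stays regular modulo $a$ after passing to $R^G$, and use the norm $\nr(x)=\prod_{g\in G}g(x)$ to manufacture an $R$-sequence of length two lying in $R^G$. The one genuine difference is where the $G$-invariance is propagated. The paper takes $rb=ac$, lifts to $r=ad$, derives $db=c$, applies $g$, and cancels $b$ from $b(g(d)-d)=0$; this requires knowing that $b$ itself is a non-zero-divisor on $R$, which the paper obtains from the permutability of regular sequences and explicitly flags as needing the local and noetherian hypotheses. Your Step 1 instead cancels $a$ from $a(g(y)-y)=0$, so the identity $aR\cap R^G=aR^G$ and hence the main assertion go through over an arbitrary commutative ring, with no appeal to permutability. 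This is a small but real simplification. In the particular case the two arguments are essentially equivalent: the paper notes that $G$ acts on $\overline{R}=R/\nr(x)R$ and that $\nr(y)$ is a product of $\overline{R}$-regular elements, while you phrase the same point as $G$-stability of $\ass_R(R/aR)$ plus prime avoidance; both deliver the needed second element, and your Step 4 supplies the routine Koszul-to-\v{C}ech justification that the paper leaves implicit.
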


\begin{proof}
Clearly, $a$ is regular over $R^G$.
Let $r\in R^G$ be such that $rb=ac$ for some $c\in R^G$. Since $a$ and $b$ is an $R$-sequence, there is $d\in R$ such that
$r=ad$. Then $a(db-c)=0$.  By this, $db=c$.
Let $g\in G$. Then $g(d)b=c=db$, i.e., $b(g(d)-d)=0$.
Recall that a permutation of regular sequences  is regular (this needs both local and noetherian assumption).
Since $b$ is regular, $g(d)=d$. Thus $d\in R^G$. By definition,
$a$ and $b$ is an $R^G$-sequence.

For the particular case, let
$x\in\fm$ be a regular element.
Let $\nr(x):=\prod_{g\in G}g(x)$. Clearly, $\nr(x)\in R^G$.
 One has  $g(x)$ is $R$-regular. (If not
then there is $y$ such that $g(x)y=0$.  Apply $g^{-1}$  to this to see $xg^{-1}(y)=0$. Since $x$ is regular, $g^{-1}(y)=0$. So $y=0$ and claim follows.)
Since product of regular elements is regular, we see  that
 $\nr(x)$ is   regular. Note that length of all maximal $R$-sequences
are the same (this needs both local and noetherian assumption). There is $y\in\fm$ which is regular over $\overline{R}:=R/ \nr(x)R$. Clearly,
$g(\nr(x)R)\subset \nr(x)R$. Thus $G$ acts on $\overline{R}$. Similarly,  $\nr(y)$ is regular over $\overline{R}$.
Set $a:=\nr(x)$ and $b:=\nr(y)$.
By the first part, $a$ and $b$ is an $R^G$-sequence. By Fact \ref{max} $R^G$ is quasilocal.
So, $\Cdepth(R^G)\geq2$ as claimed.
\end{proof}

\begin{proposition}\label{ss3}
Let $(R,\fm)$ be a  $3$-dimensional  local ring and let  $G$ be a finite group  acting on $R$.   If $R$ is $\Se(2)$ then $R^G$ is $\Se(2)$.
\end{proposition}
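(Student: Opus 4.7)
By Fact \ref{max}, $R^G$ is quasilocal with maximal ideal $\fn=\fm\cap R^G$, and integrality of $R^G\hookrightarrow R$ gives $\dim R^G=\dim R=3$. Verifying $\Se(2)$ reduces to showing, for each $\fp\in\Spec(R^G)$ of height $h\geq 1$, that $\depth(R^G_\fp)\geq\min\{2,h\}=:t$. The idea is to run the argument of Fact \ref{corob} after a $G$-equivariant localization at $\fp$.

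First I would set up the localization. Let $\fq_1,\ldots,\fq_k$ be the primes of $R$ lying over $\fp$; they form a single $G$-orbit and, since finite group actions on an integral extension satisfy going-down (any two primes over $\fp$ are $G$-conjugate, so one transports a prime over a smaller contracted prime under a suitable $g\in G$), each $\fq_i$ has height $h$. Putting $S:=R^G\setminus\fp$ (which is pointwise $G$-fixed), the ring $T:=S^{-1}R$ inherits a $G$-action, is noetherian semilocal with maximal ideals $\fq_iT$ and $T_{\fq_iT}\cong R_{\fq_i}$, and satisfies $T^G=R^G_\fp$. Since $R$ is $\Se(2)$, $\depth(R_{\fq_i})\geq t$ for each $i$, so the depth of $T$ with respect to its Jacobson radical $J(T):=\bigcap_i\fq_iT$ is also at least $t$.

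Next I would mimic the norm construction of Fact \ref{corob} inside $T$. Choose $x_1\in J(T)$ regular on $T$ and set $a:=\nr(x_1)=\prod_{g\in G}g(x_1)\in T^G$. Because $G$ permutes the $\fq_iT$, each $g(x_1)\in J(T)$, so $a\in J(T)\cap T^G$; contracting any $\fq_iT$ to $T^G$ produces the unique maximal ideal $\fp T^G$, so $a\in\fp T^G$. The conjugation argument of Fact \ref{corob} then shows $a$ is $T$-regular, hence $R^G_\fp$-regular. If $t=2$, the same procedure modulo $aT$ (using $a\in T^G$ so that $g^{-1}(aT)=aT$) yields $x_2\in J(T)$ regular on $T/aT$ and hence $b:=\nr(x_2)\in\fp T^G$ regular on $T/aT$; thus $a,b$ is a $T$-sequence lying in $J(T)$, and the first half of Fact \ref{corob} upgrades this to an $R^G_\fp$-sequence, giving $\depth(R^G_\fp)\geq 2$. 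When $t=1$ the single element $a$ already suffices.

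The main technical point is transferring Fact \ref{corob} from the local ring $R$ to the semilocal ring $T$: one must know that products of $G$-conjugates of a regular element remain regular and that a regular sequence contained in the Jacobson radical can be permuted. Both facts remain valid in the noetherian semilocal setting, and the construction above is deliberately arranged so that the whole sequence sits inside $J(T)=\bigcap_i\fq_iT$, which is exactly what makes those standard extensions apply.
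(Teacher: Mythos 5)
Your proof is correct, but it takes a genuinely different route from the paper's. The paper never leaves the local ring $R$: it first uses $\dim R=3$ together with $\Se(2)$ to conclude that $R$ is almost Cohen--Macaulay, so that $\grade(P,R)=\depth(PR_P,R_P)\geq\min\{2,\Ht(P)\}$ for every prime $P$ of $R$; this yields a regular sequence of length two inside $P$ itself, which is then normed into $P\cap R^G$ and pushed down to $R^G$ by Fact \ref{corob}, with localization at $\fp$ performed only at the very end. You instead localize first and rerun the norm construction in the semilocal ring $T=S^{-1}R$, where the hypothesis $\Se(2)$ enters exactly as the statement $\grade(\Jac(T),T)=\min_i\depth(R_{\fq_i})\geq\min\{2,\Ht(\fp)\}$. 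The price is that you must verify the semilocal analogues of the ingredients of Fact \ref{corob} (conjugates and products of regular elements are regular, and maximal regular sequences in the Jacobson radical of a noetherian ring have equal length and may be permuted); these do hold, and your use of the transitivity of $G$ on the fibre over $\fp$ to obtain going-down, hence $\Ht(\fq_i)=\Ht(\fp)$ and the identification of $\Max(T)$ with $\{\fq_iT\}$, is also sound. What you gain is real: your argument nowhere uses $\dim R=3$, nor the almost Cohen--Macaulay property it was there to supply, so it actually proves the stronger assertion that $\Se(2)$ descends to $R^G$ for an arbitrary noetherian local ring $R$ with a finite group action --- a level of generality the paper only reaches later, for wild actions on domains, via an \'etale/flatness argument.
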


\begin{proof}Since $R$ is $\Se(2)$, $\depth(R)\geq\min\{2,\dim R\}$, i.e., $\dim R\leq\depth(R)+1$. This means that $R$ is almost Cohen-Macaulay.
In the light of the almost Cohen-Macaulay property we see that $$\grade(P,R)=\depth(PR_P)\geq\min\{2,\dim R_P\}\quad\forall P\in\Spec(R)\quad(\ast)$$
 Let $\fp$ be any prime ideal in $R^G$.
 It is well-known from
\cite[Page 324]{Bk} that $\mathcal{S}^{-1}(R^G)=(\mathcal{S}^{-1}R)^G$
for any multiplicative closed subset $\mathcal{S}$ of $R^G$. Apply this for $\mathcal{S}:=R^G\setminus\fp$. So, $(R^G)_{\fp}\cong(R_{\fp})^G$. Recall that the integral
extension preserves Krull's dimension (see \cite[Ex. 9.2]{Mat}). This shows that $$\dim R_{\fp}=\dim((R_{\fp})^G)=\dim((R^G)_{\fp})=\Ht(\fp)\quad(+)$$ Let $P\in\Spec(R)$ be such that $\Ht(\mathcal{S}^{-1}P)=\dim R_{\fp}$.
Since $P\cap\mathcal{S}=\emptyset$ we get that $P\cap R^G\subset \fp$. Recall that localization (resp. lying over) does not increase height (see \cite[Ex. 9.8]{Mat}). Thus,
$$\Ht(\fp)\stackrel{(+)}=\dim R_{\fp}=\Ht(\mathcal{S}^{-1}P)\leq\Ht(P)\leq \Ht(\fp).$$That is $\Ht(\fp)=\Ht(P)$.
Suppose first that $\Ht(\fp)=1$. So, $\Ht(P)=1$. We are going to use $(\ast)$. Let $a$ be an $R$-regular in $P$. Clearly, $\nr(a)$ is an $R^G$-regular in $P\cap R^G\subset\fp$.
Suppose now that $\Ht(\fp)>1$.
This implies that there is a regular sequence $a,b$ in $P$.
 We observed that
 $\nr(a)$ is   regular. Note that length of all maximal $R$-sequences in $P$
is the same. There is $y\in P$ which is regular over $\overline{R}:=R/ \nr(a)R$.  Similarly,  $\nr(y)\in P\cap R^G\subset\fp$ is regular over $\overline{R}$. In particular, there is an $R$-sequence of length two in $P\cap R^G$.
In view of the above fact we see there is a $R^G$-regular sequence of length two in $\fp$. Regular sequence behave nicely with respect to localization. That is
$$\Cgrade(\fp R^G_{\fp},R^G_{\fp})\geq\Cgrade(\fp ,R^G )\geq \min\{2,\dim (R^G)_{\fp}\}.$$ From this we see that $R^G$ is $\Se(2)$.
\end{proof}

\begin{remark}
The $3$-dimensional assumption were used to deduce that $R$ is almost  Cohen-Macaulay. In particular, we have:
Let $(R,\fm)$ be an almost  Cohen-Macaulay local ring and let  $G$ be a finite group  acting on $R$. If $R$ is $\Se(2)$ then $R^G$ is $\Se(2)$.
\end{remark}

From now on we assume  $(R,\fm,k)$ is a
local ring of  characteristic $p>0$ and  $G$ is a cyclic group of
order $p^nh$, $n> 0$, acting on $R$.

\begin{definition}\label{wi} Following
Fogarty, we say $G$ acts \textit{wildly} on $R$ if the following three properties hold: i)  $G$ acts trivially on $k$, ii) $G$ acts freely on $X:=\Spec(R)$, and iii)
 $R^G$ is noetherian and  $R$ is a finite  $R^G$-module.
 \end{definition}

 The following extends and simplifies  \cite[Corollaire 2.4]{elli}.

\begin{proposition}
Let $(R,\fm)$ be a    local domain and  $G$  a finite group  acting wildly on $R$.
\begin{enumerate}
\item[i)] If $R$ is $\Se(2)$ then $R^G$ is $\Se(2)$.
\item[ii)]  Suppose  $\depth(R)>1$ and $\dim R>2$. Then   $R^G$ is not $\Se(n)$ for  all $n>2$. 
\end{enumerate}
\end{proposition}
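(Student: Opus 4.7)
For part (i), I adapt the proof of Proposition \ref{ss3} to arbitrary dimension. The $\dim R=3$ hypothesis there was used only to promote $\Se(2)$ to almost Cohen-Macaulayness of $R$, but $\Se(2)$ alone already suffices to build a length-two $R$-regular sequence inside any height-$\geq 2$ prime $P$ of $R$: if $a\in P$ is any $R$-regular element then, under $\Se(2)$, every associated prime of $R/aR$ has height exactly $1$, so prime avoidance produces $y\in P$ regular modulo $aR$. Concretely, given $\fp\in\Spec R^G$, I use the identification $(R^G)_{\fp}\cong\bigl((R^G\setminus\fp)^{-1}R\bigr)^G$ together with lying-over in the integral extension $R^G\subseteq R$ to lift $\fp$ to some $P\in\Spec R$ with $\Ht P=\Ht\fp$. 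Taking $a=\nr(a')$ for an initial $R$-regular $a'\in P$, the resulting pair $\nr(a'),\nr(y)\in P\cap R^G\subseteq\fp$ is $R$-regular and, by Fact \ref{corob}, $R^G$-regular. Localisation then delivers $\depth (R^G)_{\fp}\geq\min\{2,\dim (R^G)_{\fp}\}$; the case $\Ht\fp=1$ is handled by a single norm element.

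For (ii) it is enough to contradict $\Se(3)$, since $\Se(n)$ for $n\geq 3$ implies $\Se(3)$. Suppose toward contradiction that $\depth R^G\geq 3$ (allowed since $\dim R^G=\dim R>2$). Observation \ref{frsh}(ii) makes $\im(\tr)$ an $\fn$-primary ideal, so $R^G/\im(\tr)$ has finite length. I thread three short exact sequences --- (a) $0\to\im(\tr)\to R^G\to R^G/\im(\tr)\to 0$, (b) $0\to\ker(\tr)\to R\to\im(\tr)\to 0$, and (c) $0\to R^G\to\ker(\tr)\to\ker(\tr)/R^G\to 0$ --- through the local cohomology functor $\HH^\bullet_{\fn}$. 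From (a) and $\depth R^G\geq 3$ I get $\HH^1_{\fn}(\im(\tr))\cong R^G/\im(\tr)\neq 0$ and $\HH^2_{\fn}(\im(\tr))=0$. From (b) and $\depth R>1$ I get $\HH^1_{\fn}(\ker(\tr))=0$ and an injection $R^G/\im(\tr)\hookrightarrow\HH^2_{\fn}(\ker(\tr))$. Sequence (c) is available because $n\geq 1$ and $\Char R=p$ force $|G|=0$ in $R$, whence $\tr$ vanishes on $R^G$ and $R^G\subseteq\ker(\tr)$; its long exact sequence is intended to transport the nonzero class into $\HH^{j}_{\fn}(R^G)$ for some $j\in\{2,3\}$, contradicting $\depth R^G\geq 3$.

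The hard part will be closing the contradiction in (ii). The three exact sequences taken purely formally merely redistribute the obstruction across $\im(\tr)$, $\ker(\tr)$ and $\ker(\tr)/R^G$, so the final step should require extra structural information from the cyclic wild action --- for example the identity $\tr=(\sigma-1)^{p^n-1}$ in characteristic $p$ (valid on the $p$-Sylow of $G$, to which one reduces using the coprime non-modular quotient) and the associated $(\sigma-1)$-filtration of $R$ by $R^G$-submodules --- to pin the obstruction down to a nonzero element of $\HH^2_{\fn}(R^G)$. This is the mechanism behind the Ellingsrud--Skjelbred bound that the present proposition extends. Once the contradiction is produced, the failure of $\Se(n)$ for every $n>2$ is automatic.
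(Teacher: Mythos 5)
Part (i) of your argument is correct and in fact takes a cleaner route than the paper's. The paper handles $\dim R\leq 3$ by Proposition \ref{ss3} and, for $\dim R>3$, invokes the freeness of the action to make $R^G\to R$ \'{e}tale off the closed point, then works with the Jacobson radical of the semilocal ring $\mathcal{S}^{-1}R$; it needs the resulting flatness/going-down to control the heights of \emph{all} primes lying over $\fp$, because the associated prime it must exclude could a priori be any maximal ideal of $\mathcal{S}^{-1}R$. You bypass this: $\Se(2)$ directly forces every $\fq\in\ass(R/\nr(a')R)$ to have height one (such a $\fq$ has $\depth R_{\fq}=1$, hence $\dim R_{\fq}\leq 1$), so prime avoidance inside a single prime $P$ of height $\geq 2$ suffices, and you only need \emph{one} prime $P$ over $\fp$ of full height --- which the dimension-preserving property of the finite extension supplies with no flatness input. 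Combined with Fact \ref{corob} and localization this is a complete proof, valid under just Definition \ref{wi}(iii). The one step to write out carefully is the existence of $P$ with $\Ht P=\Ht\fp$ and $P\cap R^G\subseteq\fp$: lying-over plus incomparability only give $\Ht P\leq\Ht\fp$, so you should argue as in Proposition \ref{ss3} by choosing $P$ so that $\mathcal{S}^{-1}P$ has maximal height in $\mathcal{S}^{-1}R$, whose dimension equals $\Ht\fp$.

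Part (ii) has a genuine gap: the contradiction is never produced, and you say so yourself. Your three exact sequences are each correctly analysed --- $\HH^1_{\fn}(\im(\tr))\cong R^G/\im(\tr)\neq 0$, $\HH^1_{\fn}(\ker(\tr))=0$, and $R^G/\im(\tr)\hookrightarrow\HH^2_{\fn}(\ker(\tr))$ --- but under the hypothesis $\depth(R^G)\geq 3$ sequence (c) only yields an injection $\HH^2_{\fn}(\ker(\tr))\hookrightarrow\HH^2_{\fn}(\ker(\tr)/R^G)$, which is not a contradiction: the obstruction simply migrates into the quotient $\ker(\tr)/R^G$, about which you have no information. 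What is missing is precisely the theorem you would be re-proving, namely Fogarty's bound $\depth(R^G)\leq 2$ when $\depth(R)\geq 2$ and the action is wild; the paper's entire proof of (ii) consists of citing \cite[Proposition 4]{fog} for this bound and then observing that $\depth(R^G_{\fn})\leq 2<\min\{n,\dim(R^G_{\fn})\}$. If you want a self-contained argument, the workable route is the five-term exact sequence of the spectral sequence in Fact \ref{sp} (as in the proof of Proposition \ref{ob1}(b)), which begins $0\to\HH^1(G,R)\to\HH^2_{\fn}(R^G)$, together with a proof that $\HH^1(G,R)\neq 0$ for a wild action; that nonvanishing is exactly where the $(\sigma-1)$-filtration and the identity $\tr=(\sigma-1)^{p^n-1}$ on the $p$-Sylow that you allude to are actually used, and it does not follow formally from your three sequences. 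Either supply that argument or cite \cite[Proposition 4]{fog}.
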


We prove the first item by the weaker  assumption: the extension $R^G\to R$ is locally flat and the assumption given by Definition  \ref{wi}(iii).

\begin{proof} By Fact \ref{max} and Definition  \ref{wi}(iii) we see  $(R^G,\fn)$ is local.

i) The case $\dim(R)\leq 3$ is true without any condition, see Proposition \ref{ss3}. We assume that $\dim R>3$.
 Let $\fp$ be any prime ideal in $R^G$ of hight bigger than 1. In view of Fact \ref{corob} we may and do assume that $\fp\neq\fn$.
 Recall that $(R^G)_{\fp}\cong(R_{\fp})^G$. Since $G$ acts freely, the extension $R^G\to R$ is\textit{ \'{e}tale} for all $\fq\neq\fn$. Conclude by this that $(R_{\fp})^G\simeq(R^G)_{\fp}\to R_{\fp}$ is flat. Going down holds for flat extensions (see \cite[Theorem 9.5]{Mat}). In view of \cite[Ex 9.8]{Mat} and \cite[Ex 9.9]{Mat} we have $$\Ht(\fp)=\Ht(\fp (R^G)_{\fp})=\Ht(Q)\quad(+)$$ for any $Q\in\Spec(R_{\fp})$ lying over $\fp (R^G)_{\fp}\in\Spec((R^G)_{\fp})$.

Let $P\in\Spec(R)$ be such that $\mathcal{S}^{-1}P$ is the maximal ideal of $R_{\fp}$, where $\mathcal{S}:=R^G\setminus\fp$. Since $P\cap\mathcal{S}=\emptyset$ we get that $P\cap R^G\subset \fp$.
 Suppose on the contrary that $P\cap R^G\subsetneqq \fp$. By  going-up \cite[Theorem 9.4(i)]{Mat} and incomparability property \cite[Theorem 9.3(ii)]{Mat},  there is $P\subsetneqq Q\in\Spec(R)$ such that $Q\cap R^G=\fp$. Then $\mathcal{S}^{-1}P\subset  \mathcal{S}^{-1}Q$.
Since $\mathcal{S}^{-1}P$ is maximal we get to a contradiction. Hence $P\cap R^G=\fp$.
We are going to apply \cite[Ex. 9.3]{Mat}: there are only a finite number of prime ideals lying over
$\fp$. In  particular, $R_{\fp}$ is semilocal. Let $\max(R_{\fp}):=\{Q_1,\ldots,Q_n\}$.
Since $R_{\fp}$ is an integral domain and semilocal we have $\Jac(R_{\fp})\neq0$. Let $0\neq x\in\Jac(R_{\fp})$.
Suppose on the contradiction that
$\Jac(R_{\fp})\subset \bigcup_{\fq\in\ass(\frac{R_{\fp}}{(x)})} \fq$.
In the light of prime avoidance $\bigcap_{i=1}^n Q_i=\Jac(R_{\fp})\subseteq \fq$ for some $\fq\in\ass(\frac{R_{\fp}}{(x)})$.
Since the intersection index is finite and $\fq$ is prime  we have $Q_i\subseteq \fq$ for some $i$.
Thus $  Q_i=\fq$, because $Q_i$ is maximal. Suppose $Q_i=\mathcal{S}^{-1}P_i$. Then, $(R_{\fp}/(x))_{Q_i}\simeq (R/(x))_{P_i}$ is of zero depth.
Therefore, $\depth( R_{P_i})=1$. We denote this by $(\dagger)$. Recall that
$$\Ht(\fp)\stackrel{(+)}=\Ht(\mathcal{S}^{-1}P_i)\leq\Ht(P_i)\leq \Ht(\fp).$$That is $1<\Ht(\fp)=\Ht(P_i)$.  This contradicts  the  $\Se(2)$ property of $R$, see $(\dag)$. This contradiction
says that  $\Jac(R_{\fp})\nsubseteq \bigcup_{\ass(\frac{R_{\fp}}{(x)})} \fq$. Let $y\in \Jac(R_{\fp})\setminus\bigcup_{\ass(\frac{R_{\fp}}{(x)})} \fq$.
 We proved that $x$ and $y$ is an $R_{\fp}$-sequence in $\Jac(R_{\fp})$.
Clearly,
 $a:=\nr(x)$ is   regular. Note that length of all maximal $R_{\fp}$-sequences in the Jacobson radical
are the same. There is $z\in \Jac(R_{\fp})$ which is regular over $\overline{R}:=R_{\fp}/ aR_{\fp}$.  Similarly,  $b:=\nr(z)\in\Jac( R_{\fp})\cap  (R_{\fp})^G$ is regular over $\overline{R_{\fp}}$.
In the light of Fact \ref{corob} we see there is an $(R_{\fp})^G$-regular sequence of length two in $\Jac( R_{\fp})\cap  (R_{\fp})^G\subset \fp$.  From this we see that
$$\grade(\fp R^G_{\fp},R^G_{\fp})\geq \min\{2,\dim (R^G)_{\fp}\},$$and so $R^G$ is $\Se(2)$.

ii)  Since  $\depth(R)>1$ we have $\depth(R^G)\leq2$, see \cite[Proposition 4]{fog}.
Let $n>2$. Combine this with the assumption   $\dim R>2$ to see $\depth(R^G_{\fn})<\min\{\dim(R^G_{\fn}),n\}$. By definition,
 $R^G$ is not $\Se(n)$.
\end{proof}

 \begin{discussion}\label{actiontransfor}  Let $R$ be a   generalized Cohen-Macaulay
local ring of characteristic $p>0$ and of dimension $d>1$.  We assume that $R^G$ is noetherian and  $R$ is a finite  $R^G$-module.
Let $\widehat{X}:=\Spec(R)\setminus\{\fm\}$.
Then $G$ acts  on $A:=\HH^0(\widehat{X},\mathcal{O}_X)$ and  $\HH^i(G, A)$  is finitely generated as an  $R^G$-module.
 \end{discussion}

\begin{proof}
We look at $0\to\frac{R}{\HH^0_{\fm}(R)}\to A\to \HH^1_{\fm}(R)\to 0\quad(+)$
 to see $A$ is finitely generated as an $R$-module. Recall that $R$ is a finite  $R^G$-module.
 From this $A$ is finitely generated as an $R^G$-module.
 Thus, $\HH^0(G, A)=A^G$ is finitely generated as an $R^G$-module.
 To show finiteness of higher group cohomology, we denote the elements of trace zero in $A$ by $E$.
 The set $E$ is an $R^G$-submodule of $A$.
Indeed, recall that $A=\HH^0(\widehat{X},\mathcal{O}_X)={\varinjlim}_n\Hom_R(\fm^n,R)$. Let $g\in G$ and $n\in \mathbb{N}$.
Take $f:\fm^n\to R$ and define $g.f:\fm^n\to R$ by the role $(g.f)(x):=g(f(g^{-1}x))$ where $x\in\fm^n$. This defines
the action of $G$ on ${\varinjlim}\Hom_R(\fm^n,R)$. Let $r\in R^G$ and $f\in E$. We need to show $rf\in E$, i.e., $\tr(rf)=0$. To this end,
recall that
$$g.(rf)(x)=g\left((rf)(g^{-1}(x))\right)
=g\left(rf(g^{-1}(x))\right)
=g(r)g\left(f(g^{-1}(x))\right)
=rg\left(f(g^{-1}(x))\right)
=rg.(f(x)).$$
We showed $g.(rf)=r(g.f)$. We denote this property by $(\ast)$. Recall that $\tr(f)=0$.
 By definition,$$\tr(rf):=\sum_{g\in G}g.(rf)\stackrel{(\ast)}=\sum_{g\in G}r(g.f)=r\sum_{g\in G}g.f=r\tr(f)=0.$$

The set $F:=(g.r-r:r\in A)$ is an $R^G$-submodule of $A$.
Indeed, let $a\in R^G$ and let $(g.f)-f\in F$ for some $f\in A$. First,
$$a(g.f)(x)=ag(f(g^{-1}(x))=g(a)g(f(g^{-1}(x))=g\left(af(g^{-1}(x))\right)=g((af)(g^{-1}(x)).$$
From this
$$
(a(g.f-f))(x)=g\left((af)(g^{-1}(x))\right)-(af)(x)=(g.(af)-af)(x).$$
That is $a(g.f-f)=g.(af)-af\in F$.
 Keep in mind that $G$ is cyclic. Recall that (see e.g. \cite[page 178]{ns}):
\begin{equation*}
\HH^i(G, A)= \left\{
\begin{array}{rl}
A^G & \  \   \   \   \   \ \  \   \   \   \   \ \text{if } i=0\\
\frac{\ker(\tr:A\to A^G)}{\langle g(a)-a\rangle} &\  \   \   \   \   \ \  \   \   \   \   \ \text{if } i\in2\mathbb{N}+1\\
\frac{A^G}{\tr(A)} & \  \   \   \   \   \ \  \   \   \   \   \ \text{if } i\in2\mathbb{N}
\end{array} \right.
\end{equation*}
Since $\HH^{2\mathbb{N}+1}(G, A)\simeq E/F$,  $\HH^{2\mathbb{N}+1}(G,A)$ is finitely generated as an $R^G$-module.

The set $\tr(A)$ is an $R^G$-submodule of $A^G$.
Indeed, let $r\in R^G$ and $f\in \tr(A)$. Let $F\in A$ be such that $f=\tr(F)$.
In view of $(\ast)$  we have $g.(rF)=r(g.F)$. By definition, $$\tr(rF):=\sum_{g\in G}g.(rF)\stackrel{(\ast)}=\sum_{g\in G}r(g.F)=r\sum_{g\in G}g.F=r\tr(F)=rf.$$
So, $rf\in\im(\tr:A\to A^G)$, as claimed.
From this we observe that  $\HH^{2\mathbb{N}}(G, A)=\frac{A^G}{\tr(A)} $ is finitely generated as an $R^G$-module.
\end{proof}

  \begin{discussion}\label{actiolocal}  Adopt the assumption of Discussion \ref{actiontransfor} and let  $L_i:=\HH^i_{\fm}(R)$.
Then $L_i$ is a $G$-module and $\HH^j(G, L_i)$  is finitely generated as an  $R^G$-module for all $i<d$.
\end{discussion}

\begin{proof}
 The assumptions implies that $(R^G,\fn)$ is local and that $\rad(\fn R)=\fm$.
Let
$\underline{x}:=x_1,\ldots,x_d$ be a system of parameter in $\fn$.
This shows that $\rad(\underline{x} R)=\fm$.
By definition,
 $\HH^i_{\fm}(R)=\HH^i_{\underline{x}}(R)$.
Let $g:R \lo R$ be an element of $G$ and $y\in R^G$. The
assignment $r/ y^n\mapsto g(r)/ y^n$ induces an $R^G$-algebra
automorphism which we denote it again by $g:R_y \lo R_y$. This induces an $R^G$-isomorphism
of the $\check{C}$ech complexes
$g:\Check{\textbf{C}}_{\bullet}(\underline{x},R)\lo
\Check{\textbf{C}}(\underline{x},R)$. Conclude by this that there is  an $R^G$-isomorphisms
$g:\HH^i(\Check{\textbf{C}}(\underline{x},R))\lo
\HH^i(\Check{\textbf{C}}(\underline{x},R)), $
and so there is  an $R^G$-isomorphisms
$g:L_i\lo
L_i$.
Recall that:
\begin{equation*}
\HH^j(G, L_i)= \left\{
\begin{array}{rl}
\HH^i_{\fm}(R)^G & \  \   \   \   \   \ \  \   \   \   \   \ \text{if } j=0\\
\frac{\ker(\tr:L_i\to L_i^G)}{\langle g(\ell)-\ell|\ell\in L_i\rangle}&\  \   \   \   \   \ \  \   \   \   \   \ \text{if } j\in2\mathbb{N}+1\\
L_i^G/\tr(L_i) & \  \   \   \   \   \ \  \   \   \   \   \ \text{if } j\in2\mathbb{N}
\end{array} \right.
\end{equation*}
Let $\ell\in L_i$ and let $r\in R$. Since $g(r\ell)=g(r)g(\ell)$ we see that $\tr(L_i)$ is an $R^G$-submodule of $L_i^G$.
Similarly, $\ker(\tr:L_i\to L_i^G)$ and $\langle g(\ell)-\ell|\ell\in L_i\rangle$ are $R^G$-submodule of $L_i^G$.
Since $R$ is generalized Cohen-Macaulay, $L_i$ is of finite length as an $R$-module for all $i<d$. Thus
$L_i$ is  finite as an $R^G$-module for all $i<d$.
From this $\frac{L_i^G}{\tr(L_i)}$ is finitely generated as an $R^G$-module. Similarly,
$\frac{\ker(\tr:L_i\to L_i^G)}{\langle g(a)-a\rangle}$  is finitely generated as an $R^G$-module.
 \end{proof}

The origin source of the next result is \cite{local} by Grothendieck. There are a lot of works motivated by this.
 For our's propose,  Fogarty's exposition is  useful. If the reader is not
family with this technology  we suggest to look at
the friendly approach by Larry Smith \cite{simp}.

\begin{fact}(Grothendieck, Ellingsrud-Skjelbred, Fogarty, L. Smith,  etcetera) \label{sp} Let $(R,\fm,k)$ be a
local ring of  characteristic $p>0$ and  $G$ is a cyclic group of
order $p^nh$, $n> 0$, acting wilding on $R$.  By $\widehat{-}$ we mean the punctured spectrum.
In view of  \cite[Proposition 4]{fog} there is   spectral sequence
$$\HH^q\left(G,\HH^p(\widehat{\Spec(R)},\mathcal{O}_{\Spec(R)})\right)\Rightarrow \HH^{p+q}(\widehat{Y},\mathcal{O}_{\Spec(R^G)}).$$
\end{fact}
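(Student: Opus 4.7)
The plan is to produce this as the Grothendieck composition-of-functors spectral sequence attached to the tower ``take sections, then take $G$-invariants'' on the punctured spectrum $\widehat{X}:=\Spec(R)\setminus\{\fm\}$. First I would set up the geometry. Let $X:=\Spec(R)$ and $Y:=\Spec(R^G)$, with $\pi:X\to Y$ the natural map. By Definition \ref{wi}(iii), $\pi$ is finite; by Fact \ref{max}, $(R^G,\fn)$ is local, and lying-over for the integral extension $R^G\hookrightarrow R$ forces $\pi^{-1}(\fn)=\{\fm\}$. Hence $\pi$ restricts to a finite surjective map $\widehat{\pi}:\widehat{X}\to\widehat{Y}$ where $\widehat{Y}:=\Spec(R^G)\setminus\{\fn\}$.

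Second, I would verify that $\widehat{\pi}$ is a Galois étale cover with group $G$. Freeness of the action on $\widehat{X}$ (Definition \ref{wi}(ii)), together with triviality of the $G$-action on the residue field (Definition \ref{wi}(i)), kills all inertia and ramification on the punctured spectrum; combined with the finiteness in (iii) this forces $\widehat{\pi}$ to be étale of degree $|G|$, and the quotient identification is exactly $\widehat{Y}\cong\widehat{X}/G$. In particular the canonical map $\mathcal{O}_{\widehat{Y}}\to(\widehat{\pi}_{*}\mathcal{O}_{\widehat{X}})^{G}$ is an isomorphism of $\mathcal{O}_{\widehat{Y}}$-algebras.

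Third, I would run the Grothendieck spectral sequence on the category of $G$-equivariant quasi-coherent sheaves on $\widehat{X}$. The functor ``$G$-invariant global sections'' factors in two equivalent ways:
\[
(-)^{G}\circ\Gamma(\widehat{X},-)\;=\;\Gamma(\widehat{Y},-)\circ\bigl(\widehat{\pi}_{*}(-)\bigr)^{G}.
\]
Because $\widehat{\pi}$ is affine and étale Galois, $\widehat{\pi}_{*}$ is exact and sends injectives to injectives, while on the $\widehat{Y}$-side the functor $(\widehat{\pi}_{*}(-))^{G}$ is naturally isomorphic to the direct image along the quotient, so it too preserves injectives. Applied to $\mathcal{O}_{\widehat{X}}$, the Grothendieck spectral sequence for the first composition yields
\[
E_{2}^{p,q}=\HH^{p}\bigl(G,\HH^{q}(\widehat{X},\mathcal{O}_{\widehat{X}})\bigr)\;\Longrightarrow\;\HH^{p+q}(\widehat{Y},\mathcal{O}_{\widehat{Y}}),
\]
with the abutment identified via the second factorization and the isomorphism $\mathcal{O}_{\widehat{Y}}\cong(\widehat{\pi}_{*}\mathcal{O}_{\widehat{X}})^{G}$ from step two.

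The main obstacle is the acyclicity step: checking that $\Gamma(\widehat{X},I)$ is a $G$-acyclic $G$-module whenever $I$ is an injective $G$-equivariant sheaf on $\widehat{X}$, so that the composition-of-functors machinery applies. This is where étaleness of $\widehat{\pi}$ (ensuring the equivalence of categories between $G$-equivariant quasi-coherent sheaves on $\widehat{X}$ and quasi-coherent sheaves on $\widehat{Y}$) does all the work. This is precisely the content of \cite[Proposition 4]{fog}; for a self-contained and elementary walk-through I would point the reader to Larry Smith's exposition \cite{simp}.
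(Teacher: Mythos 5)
The paper offers no proof of this Fact at all: it is stated purely as a citation of \cite[Proposition 4]{fog}, so there is no internal argument to compare yours against. Your reconstruction is the standard Cartan--Leray/composite-functor derivation for the Galois cover $\widehat{X}\to\widehat{Y}$, and as a sketch it is the right skeleton --- it is indeed how Fogarty (and Ellingsrud--Skjelbred) obtain the spectral sequence, and making the factorization $(-)^{G}\circ\Gamma(\widehat{X},-)=\Gamma(\widehat{Y},-)\circ(\widehat{\pi}_{*}(-))^{G}$ explicit is genuinely more informative than the bare citation, since the rest of Section 4 of the paper (Discussions \ref{actiontransfor} and \ref{actiolocal}, Proposition \ref{ob1}) consumes exactly the two inputs your factorization isolates. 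Two corrections. First, your second step misattributes the source of \'etaleness: condition (i) of Definition \ref{wi} (trivial action on the residue field $k$) concerns the \emph{closed} point, where the inertia group is all of $G$ and the map is maximally ramified; it contributes nothing to \'etaleness on $\widehat{X}$ and is instead what guarantees $\im(\tr)\subseteq\fn$, hence $\HH^{2}(G,R)\neq0$, in the proof of Proposition \ref{ob1}(b). What makes $\widehat{\pi}$ \'etale is condition (ii) alone, i.e.\ triviality of the inertia groups at every prime other than $\fm$ (note the Definition literally says ``freely on $\Spec(R)$'', which can only be read as ``freely on the punctured spectrum'' since $\fm$ is $G$-stable). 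Second, and more a matter of honesty than of correctness: you defer the acyclicity of $\Gamma(\widehat{X},I)$ for injective equivariant $I$ --- the one step that actually requires work --- back to the very same \cite[Proposition 4]{fog}, so your write-up is ultimately no more self-contained than the paper's one-line citation; if you want a genuinely independent proof you must carry out that step (locally over $\widehat{Y}$ the module $\widehat{\pi}_{*}\mathcal{O}_{\widehat{X}}$ is an induced $\mathcal{O}_{\widehat{Y}}[G]$-module because the cover is a $G$-torsor, which kills the higher $G$-cohomology of sections over a suitable affine cover). Your index labelling $E_{2}^{p,q}=\HH^{p}(G,\HH^{q}(\cdots))$ swaps $p$ and $q$ relative to the statement; harmless, but worth aligning.
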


\begin{proposition}\label{ob1}
Let $(R,\fm,k)$ be a $d$-dimensional
local ring of prime characteristic $p$. Let  $G$ be a cyclic group of
order $p^nh$, $n> 0$, acting wildly on $R$.
The following assertions hold:
\begin{enumerate}
\item[a)]  If  $R$ is generalized Cohen-Macaulay, then   $R^G$ is generalized Cohen-Macaulay.
\item[b)]
   If $\depth(R)>1$, then $\Ht(\im(R\stackrel{\tr}\lo R^G))=\dim R$. Also, $\im(\tr)=\fn$ provided
$(R^G,\fn)$ is  quasi-Buchsbaum and $\depth(R)>2.$\end{enumerate}
\end{proposition}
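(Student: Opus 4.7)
The plan is to apply the spectral sequence
$$E_2^{p,q}=\HH^q(G,\HH^p(\widehat{X},\mathcal{O}_X))\Longrightarrow\HH^{p+q}(\widehat{Y},\mathcal{O}_Y)$$
of Fact~\ref{sp}, where $\widehat{X}:=\Spec(R)\setminus\{\fm\}$ and $\widehat{Y}:=\Spec(R^G)\setminus\{\fn\}$, and to read off the two assertions from its low-degree pieces. I will use the standard identifications $\HH^j_\fn(R^G)\cong\HH^{j-1}(\widehat{Y},\mathcal{O}_Y)$ for $j\geq 2$, $\HH^p(\widehat{X},\mathcal{O}_X)\cong L_{p+1}:=\HH^{p+1}_\fm(R)$ for $p\geq 1$, and the short exact sequence $0\to R/\HH^0_\fm(R)\to A\to L_1\to 0$ with $A:=\HH^0(\widehat{X},\mathcal{O}_X)$.

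For part (a), the target is to show that every $E_2^{p,q}$ with $p+q\leq d-2$ is of finite length over $R^G$; the filtration of the abutment then yields finite length of $\HH^j_\fn(R^G)$ for $2\leq j\leq d-1$, while the small-degree cases $j=0,1$ can be handled directly ($\HH^0_\fn(R^G)$ embeds into the finite-length module $\HH^0_\fm(R)$, and $\HH^1_\fn(R^G)=\coker(R^G\to A^G)$ is controlled by $L_1^G$ via the $G$-cohomology of the displayed sequence for $A$). For $p\geq 1$, $L_{p+1}$ has finite length by the generalized Cohen-Macaulay hypothesis, so $\HH^q(G,L_{p+1})$ is a finitely generated subquotient by Discussion~\ref{actiolocal} and hence of finite length. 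For the $p=0$ column, the long exact sequences of group cohomology attached to $0\to \HH^0_\fm(R)\to R\to R/\HH^0_\fm(R)\to 0$ and $0\to R/\HH^0_\fm(R)\to A\to L_1\to 0$ reduce the finite-length claim for $\HH^q(G,A)$ to the same one for $\HH^q(G,R)$; since $G$ acts freely on $\widehat{X}$, the localization $(R^G)_\fq\to R_\fq$ is \'etale-Galois for every $\fq\neq\fn$, which forces $\HH^q(G,R)_\fq=0$, so $\HH^q(G,R)$ is supported at $\fn$ and is of finite length by the finite-generation statement of Discussion~\ref{actiontransfor}.

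For part (b), the workhorse is the five-term exact sequence
$$0\to\HH^1(G,A)\to\HH^2_\fn(R^G)\to L_2^G\to\HH^2(G,A)\to\HH^3_\fn(R^G)$$
extracted from the same spectral sequence. Under $\depth R>1$, $\HH^0_\fm(R)=L_1=0$ and so $A=R$; cyclicity of $G$ gives $\HH^2(G,R)=R^G/\tr(R)$, and the displayed sequence presents $R^G/\tr(R)$ as an extension of a submodule of $\HH^3_\fn(R^G)$ by a quotient of $L_2^G$. Both extremes are $\fn$-torsion ($L_2$ because it is $\fm$-torsion and $\fm^k\subseteq\fn R$ for some $k$ by integrality of $R$ over $R^G$; $\HH^3_\fn(R^G)$ by definition), so $V(\tr R)\subseteq\{\fn\}$. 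Combining with $\tr(R)\subsetneq R^G$ (Fogarty's remark before Fact~\ref{max}) yields $V(\tr R)=\{\fn\}$ and hence $\Ht(\tr R)=\dim R$, proving (b)(i). If furthermore $\depth R>2$ (and $\dim R>3$, so that the quasi-Buchsbaum hypothesis applies to $\HH^3_\fn(R^G)$), then $L_2=0$, the middle term of the five-term sequence disappears, and $R^G/\tr(R)$ injects into $\HH^3_\fn(R^G)$; quasi-Buchsbaum then forces $\fn\cdot\HH^3_\fn(R^G)=0$, so $\fn\subseteq\tr(R)$, and combined with $\tr(R)\subseteq\fn$ this gives $\tr(R)=\fn$.

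The main technical obstacle lies in the $p=0$ column of part (a): the module $A$ is not itself of finite length, so one must carefully combine the exact sequences relating $A$ with $R$, $\HH^0_\fm(R)$ and $L_1$, and feed in the geometric input that $G$ acts freely on $\widehat{X}$ (where the wildness hypothesis really bites) in order to pin down the support of $\HH^q(G,R)$. The rest amounts to routine bookkeeping with long exact sequences of group cohomology.
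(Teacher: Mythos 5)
Your proposal is correct and follows the same architecture as the paper's proof: the Grothendieck--Fogarty spectral sequence of Fact \ref{sp}, finiteness of the $E_2$-terms fed in from Discussions \ref{actiontransfor} and \ref{actiolocal}, and the five-term exact sequence for part (b). The one place you genuinely diverge is the $p=0$ column in part (a): you prove finite length of $\HH^q(G,A)$ for $q\geq 1$ by a support argument (freeness of the action makes $(R^G)_{\fq}\to R_{\fq}$ Galois \'{e}tale for $\fq\neq\fn$, hence cohomologically trivial, so $\HH^q(G,R)$ is concentrated at $\fn$). The paper avoids this entirely: since $R$ is generalized Cohen--Macaulay, the sequence $0\to R/\HH^0_{\fm}(R)\to A\to\HH^1_{\fm}(R)\to 0$ already shows $A$ is a \emph{finitely generated} $R^G$-module, so for cyclic $G$ every $\HH^q(G,A)$ is a finitely generated $R^G$-module, and finite generation of the abutment suffices because $\HH^i_{\fn}(R^G)$ is automatically artinian. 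Your route costs an extra (standard but unproved) input --- cohomological triviality of Galois \'{e}tale extensions --- without buying anything here; note also that your blanket claim that every $E_2^{p,q}$ with $p+q\leq d-2$ has finite length fails at $(p,q)=(0,0)$, where $E_2^{0,0}=A^G\supseteq R^G$, though this term only feeds $\HH^0(\widehat{Y},\mathcal{O}_Y)$ and you handle $\HH^0_{\fn}$ and $\HH^1_{\fn}$ separately anyway. For part (b) your argument is the paper's, phrased with $\fn$-torsion in place of artinian-ness; your parenthetical observation that the quasi-Buchsbaum step needs $\dim R>3$ (so that $\fn\HH^3_{\fn}(R^G)=0$ actually follows from quasi-Buchsbaumness) is a genuine point that the paper's own proof passes over in silence.
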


\begin{proof}
  The assumptions implies that $(R^G,\fn)$ is local.  Let $Y:=\Spec(R^G)$, $X:=\Spec(R)$ and recall that  $\widehat{-}$ stands for the punctured spectrum.
In the light of Fact \ref{sp} we see
$\HH^q(G,\HH^p(\widehat{X},\mathcal{O}_X))\Rightarrow \HH^{p+q}(\widehat{Y},\mathcal{O}_Y).$

a) By definition $\HH^{i}_{\fm}(R)$ is finitely generated for all $i<\dim R$.
We look at the exact sequence of finitely generated $R^G$-modules $0\to R^G\to R\to R/ R^G\to 0$. This induces the following
exact sequence:$$0\to\HH^0_{\fn}( R^G)\to \HH^0_{\fn}(R)\to \HH^0_{\fn}(R/ R^G)\to \HH^1_{\fn}(R^G)\to\HH^1_{\fn}(R).$$
By independence theorem, $\HH^0_{\fn}(R)\simeq\HH^0_{\fn R}(R)$ as an $R^G$-module.
Also, $\HH^0_{\fm}(R)\simeq\HH^0_{\fn R}(R)$, because $\rad(\fn R)=\fm$.
Since $\HH^0_{\fm}(R)$ is finitely generated as an $R$-module
and that $R$ is a finite  $R^G$-module we have $\HH^0_{\fm}(R)$ is finitely
generated as an $R^G$-module. From $0\to\HH^0_{\fn}( R^G)\to \HH^0_{\fn}(R)$
we see $\HH^0_{\fn}( R^G)$ is finitely generated. We may assume $d>1$.
In view of Discussion \ref{actiontransfor} and Discussion \ref{actiolocal} cohomology groups $\HH^q(G,\HH^p(\widehat{X},\mathcal{O}_X))$ are finitely generated as an $R^G$-module for all $p<d-1$ and all $q$, because
$\HH^p(\widehat{X},\mathcal{O}_X)\simeq
\HH^{p+1}_{\fm}(R)$ if $p>0$.
For
$p+q<d-1$ the $E_2^{q,p}$ is finitely generated as an $R^G$-module. The same thing holds for all of its sub-quotients.
Thus, $E_\infty^{q,p}$ is finitely generated as an $R^G$-module.
By definition, there
is a chain
$$0=H^{n+1}\subseteq H^{n}\subseteq\cdots \subseteq
H^{0}:=\HH^{p+q}(\widehat{Y},\mathcal{O}_Y)$$  such that
$H^{i}/H^{i+1}\cong E^{i,n-i}_{\infty}$ for all $i=0,\cdots,n$. It follows
that $\HH^{p+q}(\widehat{Y},\mathcal{O}_Y)$ is  finitely generated as an $R^G$-module
provided $p+q<d-1$. Thus, $\HH^i_{\fn}(R^G)$ is finitely generated  as an $R^G$-module
for all $i<d$.

b) Since $\depth(R)>1$ we have $\HH^0_{\fm}(R)=\HH^1_{\fm}(R)=0$. From this $\HH^0(\widehat{X},\mathcal{O}_X)\simeq R$ (see (+) in Discussion \ref{actiontransfor}).
We use the five-term exact sequence of the spectral sequence
$\HH^q(G,R)\Rightarrow \HH^{p+q}(\widehat{Y},\mathcal{O}_Y)$:
$$0\lo \HH^1(G, R)\lo
\HH^1(\widehat{Y}, \mathcal{O}_Y)\lo \HH^1(\widehat{X},\mathcal{O}_X)^G\lo \HH^2(G, R)
 \lo \HH^2(\widehat{Y}, \mathcal{O}_Y).$$
 Recall that  $\HH^3_{\fn}(R^G)=\HH^2(\widehat{Y}, \mathcal{O}_Y)$ is artinian. Also $\HH^1(\widehat{X},\mathcal{O}_X)=\HH^2_{\fm}(R)$ is artinian. Conclude by this that $\HH^2(G, R)$ is artinian. Since $\HH^2(G, R)=\frac{R^G}{\im(\tr)}$  we get that $$\Ht(\im(R\stackrel{\tr}\lo R^G))=\dim R.$$
Suppose $R^G$ is  quasi-Buchsbaum   and $\depth(R)>2$. Then $\HH^1(\widehat{X},\mathcal{O}_X)^G=0$. In view of Definition \ref{wi}$(i)$,   Fogarty remarked that $1\notin\im(\tr)$. This implies that $\HH^2(G, R)\neq0$. We conclude from the quasi-Buchsbaum  property that $\fn\HH^2(\widehat{Y}, \mathcal{O}_Y)=\fn\HH^3_{\fn}(R^G)=0$. Therefore, its nonzero
submodule $\HH^2(G, R)=\frac{R^G}{\im(\tr)}$ is simple. From this we get that $\fn=\im(R\stackrel{\tr}\lo R^G)$, as claimed.
\end{proof}

\begin{corollary}
Let $(R,\fm,k)$ be a $3$-dimensional Cohen-Macaulay
local ring of  characteristic $2$. Suppose $G$ is of order two and acts  wildly on $R$.
Then $(R^G,\fn)$ is Buchsbaum if and only if $\im(\tr)=\fn$.
\end{corollary}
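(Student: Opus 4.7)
The plan is to handle the two directions separately. The forward direction is essentially immediate: if $R^G$ is Buchsbaum then it is quasi-Buchsbaum, and since $R$ is Cohen-Macaulay of dimension $3$ we have $\depth(R)=3>2$, so the second assertion of Proposition \ref{ob1}(b) delivers $\im(\tr)=\fn$.

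For the converse, assume $\im(\tr)=\fn$. Because $R$ is Cohen-Macaulay, and hence generalized Cohen-Macaulay, Proposition \ref{ob1}(a) shows that $R^G$ is generalized Cohen-Macaulay, so $\HH^i_\fn(R^G)$ has finite length for every $i<3$. I would then invoke Fact \ref{qb}: it suffices to prove that $R^G$ is quasi-Buchsbaum and that $\HH^i_\fn(R^G)=0$ for every $i\neq\depth(R^G),\dim(R^G)=3$. To do both, my plan is to compute each $\HH^i_\fn(R^G)$ with $i\leq 2$ via the spectral sequence $E_2^{p,q}=\HH^p(G,\HH^q(\widehat{X},\mathcal{O}_X))\Rightarrow\HH^{p+q}(\widehat{Y},\mathcal{O}_Y)$ from Fact \ref{sp}.

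Since $R$ is Cohen-Macaulay of dimension $3$, the only nonzero rows of $E_2$ are $q=0$, where $\HH^0(\widehat{X},\mathcal{O}_X)=R$, and $q=2$, where $\HH^2(\widehat{X},\mathcal{O}_X)=\HH^3_\fm(R)$. All $d_2$ differentials then vanish for bidegree reasons, and the only potentially nonzero higher differential is $d_3:E_3^{p,2}\to E_3^{p+3,0}$, which does not touch $E_\infty^{0,0}$ or $E_\infty^{1,0}$. From the edge identification $\HH^0(\widehat{Y},\mathcal{O}_Y)=E_\infty^{0,0}=R^G$ combined with the local cohomology sequence $0\to R^G\to \HH^0(\widehat{Y},\mathcal{O}_Y)\to \HH^1_\fn(R^G)\to 0$ (here $\HH^0_\fn(R^G)=0$ because $\rad(\fn R)=\fm$ and $\depth(R)=3$), I deduce $\HH^1_\fn(R^G)=0$. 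On the $p+q=1$ diagonal only $E_\infty^{1,0}$ survives, so $\HH^2_\fn(R^G)\simeq\HH^1(\widehat{Y},\mathcal{O}_Y)\simeq\HH^1(G,R)$.

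The final step exploits the special shape of the action: for $G$ cyclic of order $2$ in characteristic $2$ one has $g-1=g+1=\tr$ and $\Ker(\tr)=\{r:g(r)=r\}=R^G$, so $\HH^1(G,R)=R^G/\im(\tr)$. Under the hypothesis this equals $R^G/\fn=k$, hence $\HH^2_\fn(R^G)\simeq k$ is annihilated by $\fn$ and in particular nonzero, confirming $\depth(R^G)=2$. Combined with $\HH^0_\fn(R^G)=\HH^1_\fn(R^G)=0$ this shows $R^G$ is quasi-Buchsbaum, and then Fact \ref{qb} (with $\depth(R^G)=2$ and $\dim(R^G)=3$) yields the Buchsbaum property. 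The only step that I expect to require real care is the bookkeeping in the spectral sequence, specifically identifying the edge map $R^G=E_\infty^{0,0}\to \HH^0(\widehat{Y},\mathcal{O}_Y)$ with the canonical restriction; however the two-row shape of $E_2$ trivializes $d_2$ and isolates the surviving pieces, so no genuine obstacle arises.
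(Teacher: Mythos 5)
Your proof is correct and follows essentially the same route as the paper: both hinge on the identification $\HH^2_{\fn}(R^G)\simeq\HH^1(G,R)\simeq R^G/\im(\tr)$ coming from the Grothendieck--Fogarty spectral sequence of Fact \ref{sp}, together with Fact \ref{qb} to upgrade quasi-Buchsbaum to Buchsbaum. The only cosmetic differences are that the paper obtains $\depth(R^G)\geq 2$ from the elementary Fact \ref{corob} rather than from the edge map of the spectral sequence, and reads the forward direction directly off $\fn\HH^2_{\fn}(R^G)=0$ instead of citing Proposition \ref{ob1}(b).
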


\begin{proof}
By Fact \ref{corob}, $\depth(R^G)\geq2$. Let $r\in \ker(\tr:R\to R^G)$. Then $0=\tr(r)=\sum_{h\in G}h(a)=r+g(r)$ where $g$ is a generator of $G$.
Thus, $g(r)=-r=r$ because $\Char(R)=2$. We conclude that $\ker(\tr)=R^G$. Also, $$(g(r)-r)_{r\in R}=(g(r)+r)_{r\in R}=\im(\tr).$$
 Recall from Proposition \ref{ob1} that  $$\HH^2_{\fn}(R^G)\simeq\HH^1(G, R)
\simeq\frac{\ker(\tr:R\to R^G)}{\langle g(r)-r\rangle}\simeq \frac{R^G}{\im(\tr)}.$$ Suppose first that $R^G$ is Buchsbaum.
In particular, $\fn\HH^2_{\fn}(R^G)=0$. From this $\im(\tr)=\fn$. Conversely, assume $\im(\tr)=\fn$. Then
$\fn\HH^i_{\fn}(R^G)=0$ for all $i<\dim R^G$.  By definition, $R^G$ is quasi-Buchsbaum. Recall that
$\HH^i_{\fn}(R^G)=0$ for all $i\neq\dim R^G$ and $i\neq\depth(R^G)$.
In view of Fact \ref{qb}  we see $R^G$ is Buchsbaum.
\end{proof}

\begin{acknowledgement}
I thank Larry Smith for his kind  feedback on the earlier version of this draft.
\end{acknowledgement}

%%%%%%%%%%%%%%%%%%%%%%%%%%%%%%%%%%%%%%%%%%%%%%%%%%%

\end{document}